\newif\ifPDF
\newtheorem{theorem}{Theorem}[section]
\newtheorem{lemma}[theorem]{Lemma}
\newcommand{\be}{\begin{equation}}
\newcommand{\ee}{\end{equation}}
\newcommand{\bzero}{{\mathbf 0}}
\newcommand{\bbR}{\mathbb R}
\newcommand{\E}{\epsilon} 
\newcommand{\T}{\tau}
\newcommand{\bT}{{\bar\tau}}
\newcommand{\pushright}[1]{\ifmeasuring@#1\else\omit\hfill$\displaystyle#1$\fi\ignorespaces}
\newcommand{\pushleft}[1]{\ifmeasuring@#1\else\omit$\displaystyle#1$\hfill\fi\ignorespaces}
\newenvironment{keywords}
{\noindent{\bf Key words.}\small}{\par\vspace{1ex}}
\title{{Bipodal structure in oversaturated random graphs}}
\author{
Richard Kenyon\thanks{Department of Mathematics, Brown University, Providence, RI 02912; rkenyon at math.brown.edu}
\and Charles Radin\thanks{Department of Mathematics, University of Texas, Austin, TX 78712; radin@math.utexas.edu}  
\and Kui Ren \thanks{Department of Mathematics and ICES, University of Texas, Austin, TX 78712; ren@math.utexas.edu} 
\and Lorenzo Sadun\thanks{Department of Mathematics, University of Texas, Austin, TX 78712; sadun@math.utexas.edu} 
}
\begin{document}
\maketitle

\begin{abstract} {We study the asymptotics of large simple graphs
    constrained by the limiting density of edges and the limiting
    subgraph density of an arbitrary fixed graph $H$.  We prove that,
    for all but finitely many values of the edge density, if the
    density of $H$ is constrained to be slightly higher than that for
    the corresponding Erd\H{o}s-R\'enyi graph, the typical large graph
    is bipodal with parameters varying analytically with the
    densities. Asymptotically, the parameters depend only on the
    degree sequence of $H$. }
\end{abstract}

\begin{keywords}
	graph limits, entropy, bipodal structure, phases, universality
\end{keywords}

\section{Introduction}
\label{SEC:Intro}

We study the asymptotics of large, simple, labeled graphs constrained
to have subgraph densities $\E$ of edges, and $\T$ of some fixed
subgraph $H$ with $\ell \ge 2$ edges.  To study the asymptotics we
use the graphon formalism of Lov\'asz et al \cite{LS1, LS2, BCLSV,
  BCL, LS3} and the large deviations theorem of Chatterjee and
Varadhan \cite{CV}, from which one can reduce the analysis to the
study of the graphons which maximize the entropy subject to the
density constraints \cite{RS1, RS2, RRS,
  KRRS}.  
See definitions in Section \ref{SEC:Notation}.

The \emph{phase space} is the subset of
$[0,1]^2$ consisting of accumulation points of all pairs of densities
$\bT=(\E,\T)$ achievable by finite graphs. (See Figure
\ref{FIG:phase_space} for the case where $H$ is a triangle.)
Within the phase space is the `Erd\H{o}s-R\'enyi curve' (ER curve)
$\{(\E,\T)~|~\T=\E^\ell\}$, attained when edges are chosen
independently.  In this paper we study the typical behavior of large
graphs for $\T$ just above the ER curve. We will show that the
qualitative behavior of such graphs is the same for all choices of $H$
and for all but finitely many choices of $\E$ depending on $H$.

\begin{figure}[ht]
\centering
\includegraphics[angle=0,width=0.4\textwidth]{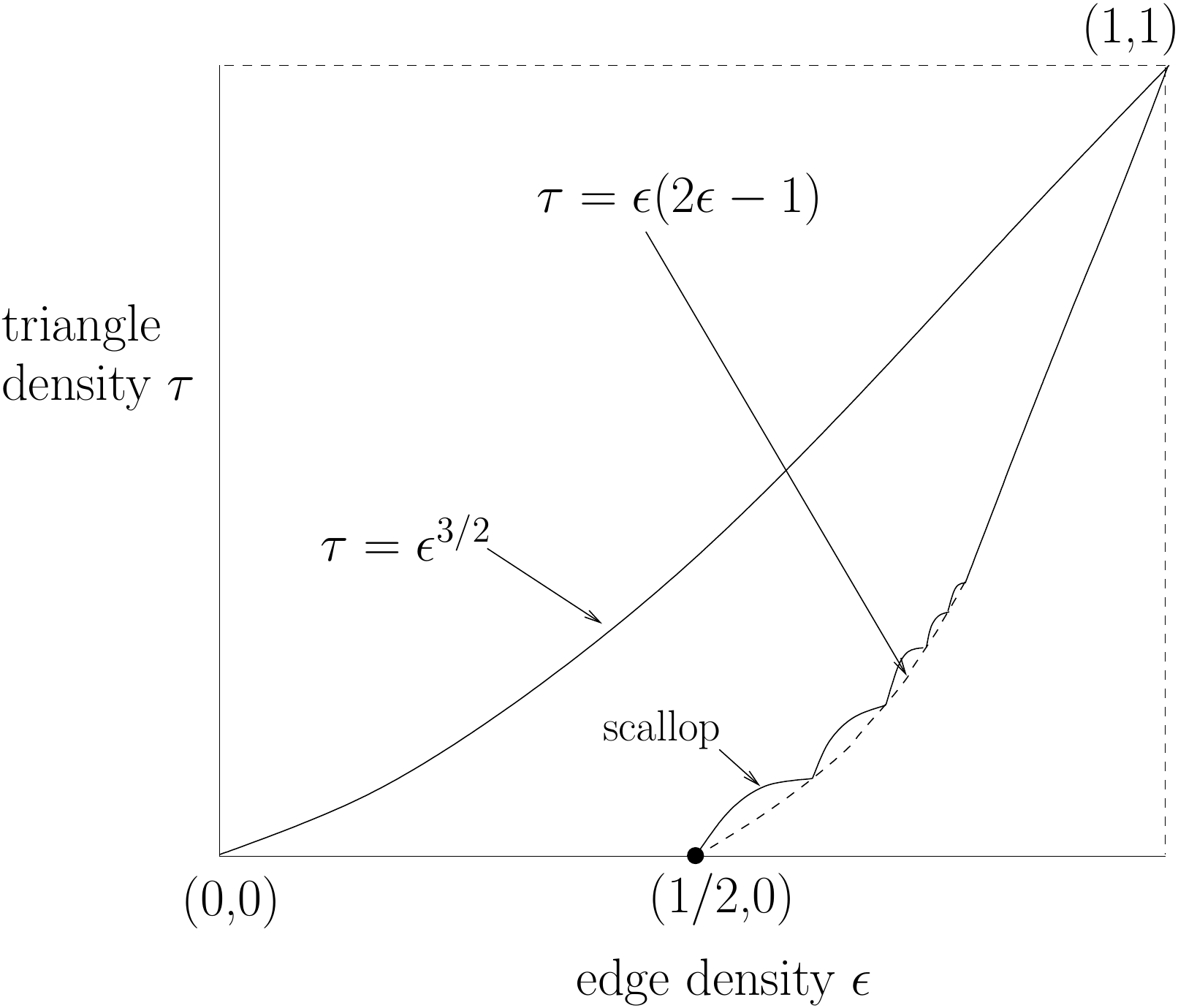}\hskip
1cm 
\includegraphics[angle=0,width=0.39\textwidth]{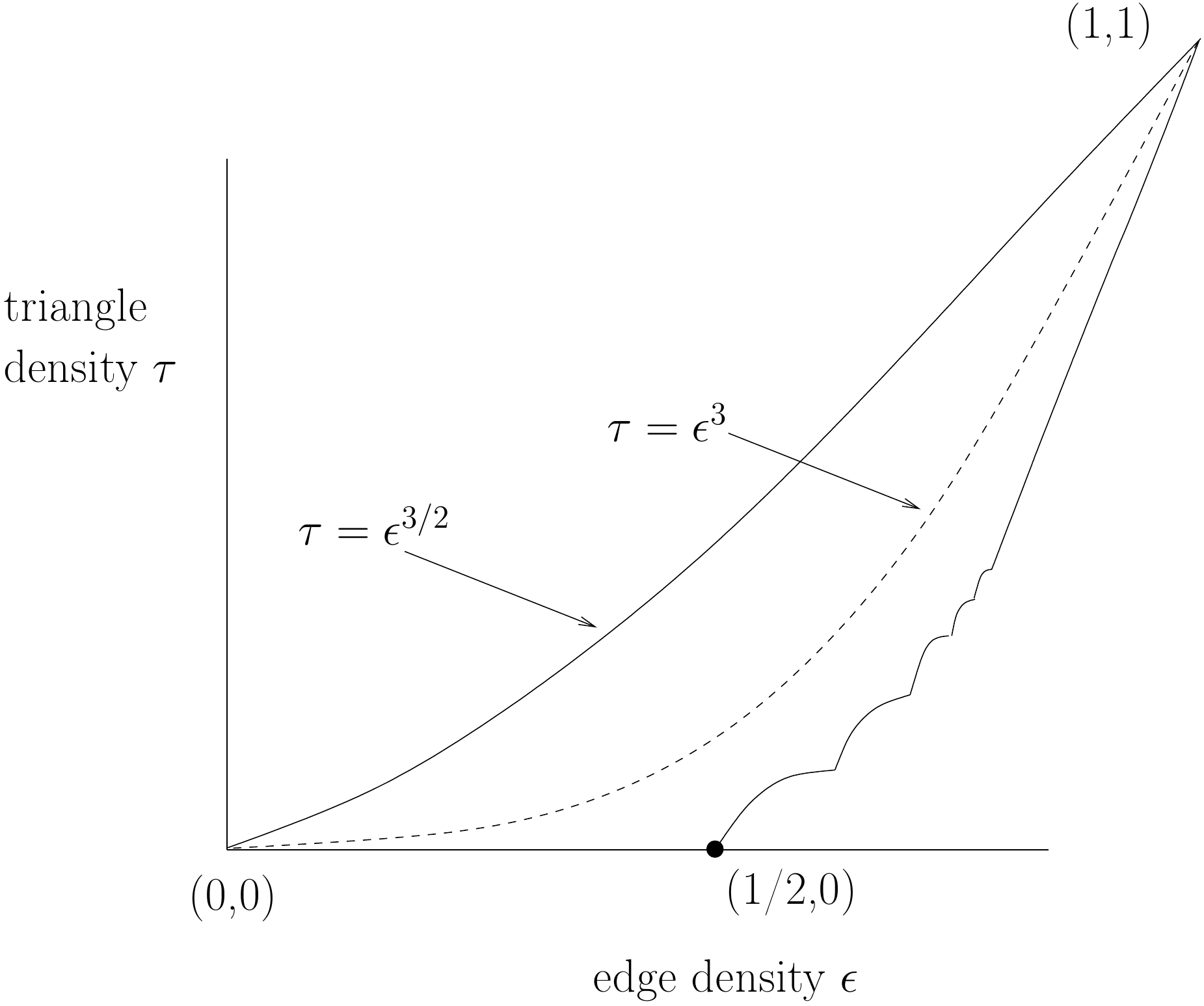} 
\caption{Boundary of the phase space for the edge/triangle model in
  solid lines. On
the right,
the Erd\H{o}s-R\'enyi curve is shown with dashes.}
\label{FIG:phase_space}
\end{figure}

To be precise, we show that for fixed $H$, for $\E$ outside a finite
set, and for $\T$ close
enough to $\E^\ell$, there is a {\it unique} entropy-maximizing
graphon (up to measure-preserving transformations of the unit
interval); furthermore it is bipodal and depends analytically on $(\E,\T)$,
implying that the entropy is an analytic function of $(\E,\T)$. In particular we prove 
the existence of one or more well-defined
thermodynamic phases just above the ER curve. {This is the first proof, as far as we know,
of the existence of a phase in any constrained-density graphon model,
where by \emph{phase} we mean a (maximal) open set in the phase space
where the 
entropy varies analytically with the constraint parameters. Conjecturally, phases form an open dense subset of the phase space.}

A \emph{bipodal graphon} is a function $g: [0,1]^2 \to [0,1]$ of the
form:
\be
 g(x,y) = \begin{cases} p_{11} & x,y < c, \cr 
  p_{12} & x<c<y, \cr p_{12} & y<c<x, \cr p_{22} & x,y >
  c. \end{cases} 
\ee
 Here $c,p_{11}, p_{12}$ and $p_{22}$ are
constants taking values between 0 and 1.  We prove that as $\T\searrow \E^\ell$, 
the parameters $c \to 0$, $p_{22} \to \E$, and
$p_{11}$ and $p_{12}$ approach the solutions of a problem in single-variable calculus. The inputs to that calculus problem depend only on
the degrees of the vertices of $H$.

We say that a finite graph $H$ is {\em $k$-starlike} if all the vertices
of $H$ have degree $k$ or 1, where $k >1$ is a fixed
integer. $k$-starlike graphs include $k$-stars (where one vertex has
degree $k$ and $k$ vertices have degree 1), and the complete graph on
$k+1$ vertices.  For fixed $k$, all $k$-starlike graphs behave
essentially the same for our asymptotics. We prove our results first for $k$-stars,
and then apply perturbation theory to show that the differences
between different $k$-starlike graphs are irrelevant, {and then prove the general case}.
 
To state our results more precisely, we need some notation. Let 
\begin{equation}
S_0(w) =  -\frac{1}{2}[w\log w +(1-w)\log(1-w)],
\end{equation}
and define the \emph{graphon entropy} {(or \emph{entropy} for short)}
of a graphon $g$ to be 
\begin{equation}
s(g) = \int_0^1\!\int_0^1  S_0(g(x,y)) dx\, dy.\end{equation}
Let  
\begin{equation} \label{firstrat}
\psi_k(\E,\tilde \E) =  \frac{2[S_0(\tilde \E)-S_0(\E) -S_0'(\E)(\tilde \E - \E)]}
{\tilde \E^k - \E^k -k\E^{k-1} (\tilde \E -\E )}.
\end{equation}
This function has a removable singularity at $\tilde \E=\E$, which we
fill by defining
\be
\psi_k(\E,\E) = \frac{2 S_0''(\E)}{k(k-1) \E^{k-2}}.\ee
For fixed $\E$, let $\zeta_k(\E)$ be the value of $\tilde \E$ that
maximizes $\psi_k(\E,\tilde \E)$. (We will prove that this maximizer
is unique and depends continuously on $\E$.)

\begin{theorem} \label{Main-Thm-Simple} Let $H$ be a $k$-starlike graph with
  $\ell\ge 2$
  edges.  Let $\E \in
  (0,1)$ be any point other than $(k-1)/k$. Then there is a number
  $\T_0> \E^\ell$ (depending on $\E$) such that for all $\T \in
  (\E^\ell, \T_0)$, the entropy-maximizing graphon at $(\E,\T)$ is
  unique (up to measure-preserving transformations of $[0,1]$) and
  bipodal. The parameters $(c, p_{11}, p_{12}, p_{22})$ are analytic
  functions of $\E$ and $\T$ on the region $\E \ne (k-1)/k$, $\T \in
  (\E^\ell, \T_0(\E))$. Furthermore, as $\T\searrow\E^\ell$ we have that
$p_{22} \to \E$, $p_{12} \to \zeta_k(\E)$, $p_{11}$ satisfies
  $S_0'(p_{11}) = 2S_0'(p_{12}) - S_0'(p_{22})$, and $c=O(\T-\E^\ell)$.
\end{theorem}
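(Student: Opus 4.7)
The plan is to first handle the case $H=K_{1,k}$ (the $k$-star, so $\ell=k$), and then transfer to arbitrary $k$-starlike $H$ by a perturbation argument. For a bipodal graphon with parameters $(c,p_{11},p_{12},p_{22})$, the degree function is piecewise constant with values $d_1=cp_{11}+(1-c)p_{12}$ and $d_2=cp_{12}+(1-c)p_{22}$, so the $k$-star density reduces to $\T=cd_1^k+(1-c)d_2^k$. Lagrange multipliers for maximizing $s(g)$ subject to the edge and $k$-star constraints yield the Euler--Lagrange equation
\begin{equation*}
S_0'(g(x,y))=\alpha+\tfrac{\beta k}{2}\bigl[d(x)^{k-1}+d(y)^{k-1}\bigr],
\end{equation*}
which on a bipodal graphon becomes three equations in $p_{11},p_{12},p_{22}$. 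Eliminating $\alpha$ and $\beta$ yields $S_0'(p_{11})=2S_0'(p_{12})-S_0'(p_{22})$, the stated relation.

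Next I would perform a perturbation expansion in small $c$. The edge constraint gives $p_{22}=\E-2c(p_{12}-\E)+O(c^2)$, and substituting into the star density yields
\begin{equation*}
\T-\E^k = c\bigl[p_{12}^k-\E^k-k\E^{k-1}(p_{12}-\E)\bigr]+O(c^2),
\end{equation*}
whose bracket is strictly positive for $p_{12}\neq\E$, confirming $c=O(\T-\E^k)$. A parallel expansion of the entropy gives
\begin{equation*}
s(g)-S_0(\E)=2c\bigl[S_0(p_{12})-S_0(\E)-S_0'(\E)(p_{12}-\E)\bigr]+O(c^2),
\end{equation*}
and eliminating $c$ shows that, to leading order in $\T-\E^k$, maximizing entropy is equivalent to maximizing $\psi_k(\E,p_{12})$ over $p_{12}$. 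The optimum is $p_{12}=\zeta_k(\E)$, and the EL relation then fixes $p_{11}$; uniqueness of $\zeta_k(\E)$ itself must be verified by a direct analysis of the derivative of $\psi_k$ in its second argument.

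To promote this leading-order analysis to analyticity and local uniqueness, I would treat the full critical-point system (three EL equations and two density constraints) as four equations in $(c,p_{11},p_{12},p_{22})$ parametrized by $(\E,\T)$, and apply the analytic Implicit Function Theorem at the limit configuration $c=0$, $p_{22}=\E$, $p_{12}=\zeta_k(\E)$. A direct Jacobian computation should show nondegeneracy unless the second derivative of $\psi_k(\E,\cdot)$ at $\zeta_k(\E)$ vanishes, and I expect this to pinpoint $\E=(k-1)/k$ as the sole exceptional value. The main obstacle is proving that these bipodal critical points are global entropy maximizers: I would attempt a compactness argument in the cut metric to show that any sequence of entropy maximizers at $(\E,\T_n)$ with $\T_n\searrow\E^k$ converges to the constant graphon $\equiv\E$, then use quantitative stability to confine true maximizers for $\T$ near $\E^k$ to a neighborhood of the bipodal critical family, where local uniqueness from the IFT applies.

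Finally, to extend from $k$-stars to arbitrary $k$-starlike $H$, I would expand $t(H,g)$ in $c$ for a bipodal $g$: the $O(c)$ term is a sum over a single vertex being placed in the small block, and a direct computation using $ka+b=2\ell$ (with $a$ degree-$k$ vertices and $b$ degree-$1$ vertices) shows that the degree-$1$ contributions cancel against the $p_{22}$ correction from the edge constraint, leaving only the degree-$k$ vertex contributions. These reproduce the $k$-star expansion up to the multiplicative factor $a\E^{\ell-k}$. The leading-order reduction therefore yields the same $\zeta_k(\E)$ and EL relation, and analyticity for the $k$-starlike case follows from a second application of the Implicit Function Theorem.
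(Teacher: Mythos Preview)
Your overall architecture matches the paper's: prove the $k$-star case first, then transfer to general $k$-starlike $H$ by comparing $\Delta\tau_H$ to a multiple of $\Delta\tau_k$. Your local analysis is also essentially the paper's: the Euler--Lagrange relation $S_0'(p_{11})=2S_0'(p_{12})-S_0'(p_{22})$, the leading-order identification of $p_{12}$ as the maximizer of $\psi_k(\E,\cdot)$, and the Implicit Function Theorem for analyticity and local uniqueness are all correct and carried out in the paper in the same way.

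The genuine gap is the step you flag as ``the main obstacle,'' and your proposed resolution does not close it. Knowing that entropy maximizers at $(\E,\tau_n)$ converge to the constant graphon (in cut or $L^2$) is easy, but it does \emph{not} confine the maximizer to a neighborhood of the bipodal family in any useful sense: a tripodal or more complicated graphon with several tiny clusters is also $L^2$-close to constant, and your IFT gives uniqueness only among bipodal competitors, not among all graphons. The paper closes this gap in two moves. First, for $k$-stars it invokes the prior result that every entropy maximizer is $M$-podal for some bounded $M$, reducing the problem to finitely many parameters. Second (and this is the substantive part), it writes any such maximizer as $g=g_b+\Delta g_f$ with $g_b$ the bipodal average over quadrants and $\Delta g_f$ mean-zero on each quadrant, and then proves by a direct second-order computation that $(s(g)-s(g_b))/(\tau_k(g)-\tau_k(g_b))$ is bounded by a constant $\beta_0<\beta$. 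Since adjusting $c$ along the bipodal family achieves slope $\beta$, any nonzero $\Delta g_f$ strictly lowers entropy at fixed $\tau_k$, forcing exact bipodality. Your proposal contains no analogue of this comparison, and without it the IFT step is only a local statement about one critical branch.

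A second, related gap appears in your extension to $k$-starlike $H$. You expand $t(H,g)$ in $c$ \emph{assuming $g$ is bipodal}, which only shows that the bipodal critical points agree to leading order with those of the $k$-star model. But for general $H$ one does not know a priori that maximizers are multipodal, so bipodality must be proved from scratch. The paper does this by first establishing the approximation $\Delta\tau_H=n_k\E^{\ell-k}\Delta\tau_k+O(\|\Delta g\|^3)$ for \emph{arbitrary} graphons $g$ (not just bipodal ones), then bootstrapping the pointwise Euler--Lagrange equation to show $\Delta g_f$ is pointwise small, and finally rerunning the quadratic comparison above with explicit control of the $O(\|\Delta g\|^3)$ error terms. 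Your bipodal-only expansion skips all of this.
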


  Theorem \ref{Main-Thm-Simple} proves that there
  is part of a phase just above the ER curve for $\E < (k-1)/k$ and also for
  $\E > (k-1)/k$; numerical evidence suggests these are in fact parts
  of a single phase; the only `singular' behavior is the manner in which
  the graphon approaches the constant graphon associated with the ER curve. We will see in Theorem
  \ref{Main-Thm-Complex} that this behavior is only slightly
  more complicated for general $H$ than it is for $k$-starlike $H$.

When $H$ has vertices with different degrees $>1$, the problem
resembles that of a formal positive linear combination of
$k$-stars. As in the $k$-starlike case, we first solve the problem for
the linear combination of $k$-stars and then use perturbation theory
to extend the results to arbitrary $H$.

\begin{theorem} \label{Main-Thm-Complex} Let $H$ be an arbitrary graph
  with $\ell$ edges with at least one vertex of degree $2$ or
  greater. Then there exists a finite set $B_H \subset (0,1)$ such that if $\E \ne
  B_H$, then there is a number $\T_0> \E^\ell$ (depending on $\E$)
  such that for all $\T \in (\E^\ell, \T_0)$, the entropy-maximizing
  graphon at $(\E,\T)$ is unique (up to measure-preserving
  transformations of $[0,1]$) and bipodal. The parameters $(c, p_{11},
  p_{12}, p_{22})$ are analytic functions of $\E$ and $\T$ on the
  region $\E \not \in B_H$, $\T \in (\E^\ell, \T_0(\E))$. Furthermore,
  as $\T\searrow \E^\ell$ we have that $p_{22} \to \E$, $p_{12}$
  approaches the maximizer of an explicit function whose data depends
  on $\E$, $p_{11}$ satisfies $S_0'(p_{11}) = 2S_0'(p_{12}) -
  S_0'(p_{22})$, and $c=O(\T-\E^\ell)$.
\end{theorem}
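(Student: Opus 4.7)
The plan is to mimic the proof of Theorem \ref{Main-Thm-Simple}, treating a general $H$ as a small perturbation of an auxiliary \emph{weighted-star} problem. If the vertex degrees of $H$ are $k_1,\ldots,k_n$, let $\alpha_i\ge 0$ be the combinatorial weight with which a vertex of degree $k_i$ contributes to the leading-order expansion of $t(H,g)$ about a constant graphon. I would first consider the modified variational problem in which the single constraint $t(H,g)=\T$ is replaced by a constraint on the weighted star density $\sum_{i:\,k_i\ge 2}\alpha_i\,t(S_{k_i},g)$. The motivation is that for a bipodal $g$ with small $c$ and $p_{22}$ close to $\E$, the expansion of $t(H,g)-\E^\ell$ in the deviation from $g\equiv\E$ agrees, at the first non-vanishing order, with the corresponding expansion of the weighted star density: in the bipodal regime the dominant contribution comes from placing at most one vertex of $H$ into the small block $[0,c)$, which reduces the local neighborhood count of $H$ to a sum of star counts weighted by the degree sequence.

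Second, I would solve the auxiliary (weighted-star) problem by exactly the argument used in Theorem \ref{Main-Thm-Simple}. The bipodal Euler--Lagrange equations, expanded as $c\to 0$, reduce to maximizing over $\tilde\E\in[0,1]$ the rational function
\begin{equation}
\Psi_H(\E,\tilde\E)=\frac{2[S_0(\tilde\E)-S_0(\E)-S_0'(\E)(\tilde\E-\E)]}{\sum_{i:\,k_i\ge 2}\alpha_i\bigl[\tilde\E^{k_i}-\E^{k_i}-k_i\E^{k_i-1}(\tilde\E-\E)\bigr]},
\end{equation}
which is the natural generalization of $\psi_k$ in \eqref{firstrat}. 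By the same convexity and degree-counting arguments used for $\psi_k$, one shows $\Psi_H(\E,\cdot)$ has a unique non-degenerate maximizer $\zeta_H(\E)$, analytic in $\E$, outside a finite exceptional set $B_H\subset(0,1)$; this set is precisely where the maximizer fails to be unique or the second-order test degenerates. The leading-order bipodal profile is then $p_{22}\to\E$, $p_{12}\to\zeta_H(\E)$, $S_0'(p_{11})=2S_0'(p_{12})-S_0'(p_{22})$, with $c=O(\T-\E^\ell)$ from the density constraint.

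Third, I would transfer these conclusions to the original $H$-problem via the implicit function theorem. Let $F_H=0$ denote the square system of bipodal Euler--Lagrange equations with Lagrange multipliers together with the two density constraints, and let $F_{\mathrm{aux}}=0$ be the analogous system for the weighted-star problem. The previous step produces an analytic solution branch of $F_{\mathrm{aux}}=0$ bifurcating from the ER point $(\E,\E^\ell)$ for each $\E\notin B_H$, along which the Jacobian in $(c,p_{11},p_{12},p_{22})$ is non-singular. Because $F_H-F_{\mathrm{aux}}$ vanishes to higher order in $c$ (the difference between $t(H,g)$ and the weighted star density enters only at sub-leading order in the bipodal expansion), the Jacobian of $F_H$ remains invertible along the branch and one obtains an analytic family of bipodal critical points for the $H$-problem with the asymptotics stated in the theorem.

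The main obstacle, as in the $k$-starlike case, is promoting these bipodal critical points to genuine global entropy maximizers and ruling out any non-bipodal competitor just above the ER curve. I would combine a compactness argument on the space of graphons -- guaranteeing that \emph{some} optimizer exists and must approach $g\equiv\E$ in the cut metric as $\T\searrow\E^\ell$ -- with a constrained Hessian analysis of the entropy at the constant graphon: generically the null directions of this Hessian force a bipodal profile on any near-ER optimizer, and $B_H$ is precisely where extra null directions appear so the forcing breaks down. Establishing that \emph{every} near-ER optimizer, rather than only our constructed critical point, must be bipodal is the delicate part and is where most of the technical work lies.
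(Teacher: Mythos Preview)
Your overall architecture matches the paper's: reduce the $H$-problem to an auxiliary linear combination of $k$-star densities (with coefficients $n_k \E^{\ell-k}$ coming from the degree sequence), solve that auxiliary problem, and then transfer the conclusions back to $H$ via a perturbation argument. The asymptotic bipodal data you write down, including the generalized ratio $\Psi_H$, agree with the paper.

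There is, however, a genuine gap in your treatment of the finiteness of $B_H$. You assert that ``by the same convexity and degree-counting arguments used for $\psi_k$'' the function $\Psi_H(\E,\cdot)$ has a unique non-degenerate maximizer outside a finite set. This is not true: the argument of Theorem~\ref{thm-zeta} (in particular Steps~2 and~4) does \emph{not} carry over to linear combinations, and the paper says so explicitly in Section~\ref{SEC:linear-com} (``Unlike in the $k$-star case, it is not true that $\psi'(\E,\tilde\E)$ has a unique solution for each $\E$''). Moreover, the weights $\alpha_i$ you introduce are themselves powers of $\E$, so the auxiliary problem changes as $\E$ varies; knowing that each fixed-coefficient problem has only finitely many bad $\E$'s does not by itself bound $B_H$. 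The paper's route is different: the conditions defining $B_H$ (multiple global maxima of $\Psi_H(\E,\cdot)$, or $\partial\beta/\partial p_{12}=0$) are analytic in $\E$, so $B_H$ is either all of $(0,1)$ or discrete with possible accumulation only at $0$ and $1$. Two separate lemmas then rule out accumulation at the endpoints by analyzing the asymptotics of $\Psi_H$ as $\E\to 0$ and $\E\to 1$. You would need something of this kind.

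A smaller point: your plan for promoting the bipodal critical branch to a global maximizer (``Hessian analysis at the constant graphon forces a bipodal profile'') is in the right spirit but underspecified compared to what the paper does. The paper first uses the pointwise Euler--Lagrange equations and a bootstrap over the quadrants $U\times U$, $U\times U^c$, $U^c\times U^c$ to show that any optimizer is \emph{pointwise} close to bipodal, then writes $g=g_b+\Delta g_f$ with $\Delta g_f$ averaging to zero on each quadrant and proves the sharp inequality $(s(g)-s(g_b))/(\T(g)-\T(g_b))<\beta$ by explicit quadratic estimates. A bare second-variation argument at the constant graphon $g_0$ will not yield this, because the optimizer is $L^2$-close to $g_0$ but \emph{not} pointwise close; the relevant comparison has to be made at the nearby bipodal graphon $g_b$, not at $g_0$.
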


The key differences between the Theorems \ref{Main-Thm-Simple} and
\ref{Main-Thm-Complex} are:
\begin{itemize}
\item For $k$-starlike graphs, the set $B_H$ of bad values of $\E$
  consists of a single point, and this point is explicitly known: $\E =
  (k-1)/k$.
\item For $k$-starlike graphs, the behavior of $\zeta_k$ is {explicit}. It is
  a continuous and strictly decreasing function of $\E$, and gives an
  involution of $(0,1)$. (That is, $\zeta_k(\zeta_k(\E))=\E$.) For
  $k=2$ it is given by $\zeta_2(\E)=1-\E$.  In the general
  case, the limiting value of $p_{12}$, and its dependence on $\E$,
  appear to be much more complicated. We do not know whether this
  limiting value is always continuous across the bad set $B_H$.
\end{itemize}

The organization of this paper is as follows.  In Section~\ref{SEC:Notation} we review the formalism of graphons and establish basic notation. In Section~\ref{SEC:k-star-prelim} we establish a number of technical results for $k$-star models. Using these results, in Section~\ref{SEC:k-stars} we prove Theorem~\ref{Main-Thm-Simple} for the case that $H$ is a $k$-star. In Section~\ref{SEC:k-simple} we show that just above the ER curve a model with an arbitrary $k$-starlike $H$ can be approximated by a $k$-star model. By bounding the error terms, we prove Theorem~\ref{Main-Thm-Simple} in full generality.  In Section~\ref{SEC:linear-com} we consider formal positive linear combinations of $k$-stars, and prove a theorem much like Theorem~\ref{Main-Thm-Complex} for those models. Finally, in Section~\ref{SEC:complex} we show that the model for an arbitrary $H$ can be approximated by a formal linear combination of $k$-stars, thus completing the proof of Theorem~\ref{Main-Thm-Complex}.

\section{Notation and background}
\label{SEC:Notation}

We consider a simple graph $G$ (undirected,
with no multiple edges or loops) with a vertex set $V(G)$ of labeled
vertices. For a subgraph $H$ of $G$, let $T_H(G)$ be the number of
maps from $V(H)$ into $V(G)$ which preserve edges. The \emph{density}
$\T_H(G)$ of $H$ in $G$ is then defined to be
\begin{equation}
\T_H(G):= \frac{|T_H(G)|}{n^{|V(H)|}},
\end{equation}
where $n = |V(G)|$. An important special case is where $H$ is 
a `$k$-star', a graph with $k$ edges, all with a
common vertex, for which we use the notation $\T_k(G)$. 
In particular $\T_1(G)$, which we also denote $\E(G)$, is the edge density of $G$.

For $\alpha > 0$ and $\bT=(\E,\T_H)$ define
$\displaystyle Z^{n,\alpha}_{\bT}$ to be the number of graphs $G$ on $n$
vertices with densities satisfying
\begin{equation}
\E(G) \in (\E-\alpha,\E+\alpha), \ \T_{H}(G) \in (\T_H-\alpha,\T_H+\alpha).
\end{equation}

Define the \emph{(constrained) entropy} $s_{\bT}$ to be the
exponential rate of growth of $Z^{n,\alpha}_{\bT}$ as a function of $n$:
\begin{equation}
s_{\bT}=\lim_{\alpha\searrow 0}\lim_{n\to \infty}\frac{\ln(Z^{n,\alpha}_{\bT})}{ n^2}.
\end{equation}
The double limit defining the entropy $s_{\bT}$ is known to
exist \cite{RS1}. To analyze it we make use of a variational
characterization of $s_{\bT}$, and for this we need further notation to
analyze limits of graphs as $n\to \infty$. (This work was recently
developed in \cite{LS1,LS2,BCLSV,BCL,LS3}; see also the recent book
\cite{Lov}.)  The (symmetric) adjacency matrices of graphs on $n$
vertices are replaced, in this formalism, by symmetric, measurable
functions $g:[0,1]^2\to[0,1]$; the former are recovered by using a
partition of $[0,1]$ into $n$ consecutive subintervals. The functions
$g$ are called graphons.

For a graphon $g$ define the \emph{degree function} $d(x)$ to be
$d(x)=\int^1_0 g(x,y)dy$.  The $k$-star density of $g$, $\T_k(g)$,
then takes the simple form
\begin{equation}
 \T_k(g) =  \int_0^1 d(x)^k\,dx.
\end{equation}
For any fixed graph $H$, the $H$-density $\T_H$ of $g$ can be similarly
expressed as an integral of a product of factors $g(x_i,x_j)$. 

The following is Theorem 4.1 in \cite{RS2}:
\begin{theorem}[The Variational Principle]
For any feasible set $\bT$ of values of the densities $\bT(g) := (\E, \T_H)$ 
we have 
$s_{\bT} = \max [s(g)]$, where the entropy is 
maximized over all graphons $g$ with $\bT(g)=\bT$.
\end{theorem}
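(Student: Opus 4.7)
The plan is to deduce the variational principle from the Chatterjee--Varadhan large deviations theorem \cite{CV}, reinterpreting the counting quantity $Z^{n,\alpha}_{\bT}$ as a probability under the uniform measure on graphs on $n$ vertices and then identifying the resulting rate function with the graphon entropy $s(g)$. First I would write
\begin{equation}
Z^{n,\alpha}_{\bT} = 2^{\binom{n}{2}}\,\mathbb{P}\!\left(\tilde G_n \in U_\alpha(\bT)\right),
\end{equation}
where $\tilde G_n$ is a uniformly random simple graph on $n$ labelled vertices (equivalently $G_{n,1/2}$) and $U_\alpha(\bT)$ is the set of graphs whose edge density and $H$-density both lie within $\alpha$ of $\bT$. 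Upon dividing by $n^2$ and sending $n\to\infty$, the prefactor contributes $\tfrac{1}{2}\log 2$, so everything reduces to computing the exponential decay rate of the probability.

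Next I would invoke the key fact from \cite{LS1,BCLSV} that subgraph densities, and hence the map $g\mapsto \bT(g)=(\E(g),\T_H(g))$, are continuous on the space $\widetilde{\mathcal W}$ of graphons modulo measure-preserving transformations, equipped with the cut metric. Thus $U_\alpha(\bT)$ corresponds to an open (respectively, its closure to a closed) neighborhood in $\widetilde{\mathcal W}$ of the level set $\{g:\bT(g)=\bT\}$. Applying the Chatterjee--Varadhan LDP with rate function
\begin{equation}
I_{1/2}(g) = \tfrac{1}{2}\!\int_0^1\!\!\int_0^1\!\bigl[g\log(2g)+(1-g)\log(2(1-g))\bigr]dx\,dy = \tfrac{1}{2}\log 2 - s(g),
\end{equation}
the upper and lower LDP bounds sandwich $\tfrac{1}{n^2}\log\mathbb{P}(\tilde G_n\in U_\alpha(\bT))$ between the negative infimum of $I_{1/2}$ over the open neighborhood and over its closure. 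Taking $n\to\infty$ and then $\alpha\searrow 0$, and using continuity of $\bT$ and upper semicontinuity of $s$, both bounds collapse to $-\inf_{g:\bT(g)=\bT} I_{1/2}(g)$. Combining with the $\tfrac{1}{2}\log 2$ from the prefactor yields
\begin{equation}
s_{\bT} = \tfrac{1}{2}\log 2 - \inf_{\bT(g)=\bT} I_{1/2}(g) = \sup_{\bT(g)=\bT} s(g),
\end{equation}
which is the claimed formula. Finally, existence of a maximizer (so that $\sup$ is a $\max$) follows from compactness of $\widetilde{\mathcal W}$ in the cut metric together with upper semicontinuity of $s$.

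The main obstacle, and the step that requires the most care, is verifying that the $\alpha\searrow 0$ limit genuinely closes the gap between the LDP upper and lower bounds at the prescribed density pair. This is not automatic: one must check that $\inf_{g\in \overline{U_\alpha}} I_{1/2}(g)\to \inf_{\bT(g)=\bT} I_{1/2}(g)$ as $\alpha\to 0$, which uses both continuity of the density map $\bT$ and a compactness/semicontinuity argument for $I_{1/2}$. A secondary technical point is feasibility: one needs $\bT$ to lie in the interior (relative to the phase space) of attainable densities so that $U_\alpha(\bT)$ is nonempty for small $\alpha$, giving meaningful lower bounds. Once these two points are established, the rest is bookkeeping between the combinatorial quantity $Z^{n,\alpha}_{\bT}$ and the probabilistic rate function.
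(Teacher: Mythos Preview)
The paper does not prove this theorem; it simply quotes it as Theorem~4.1 of \cite{RS2} and remarks that the existence of a maximizing graphon was established in \cite{RS1} by adapting \cite{CV}. So there is no ``paper's own proof'' to compare against here.

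That said, your outline is correct and is precisely the argument underlying \cite{RS2,RS1}: rewrite the count as $2^{\binom{n}{2}}$ times a probability under $G(n,1/2)$, apply the Chatterjee--Varadhan LDP in the cut topology, use continuity of $g\mapsto(\E(g),\T_H(g))$ to identify the constraint set, and absorb the $\tfrac12\log 2$ to pass from the rate function $I_{1/2}$ to the graphon entropy $s$. Your identification of the delicate step---matching the LDP upper and lower bounds as $\alpha\searrow 0$ via lower semicontinuity of $I_{1/2}$ and compactness of $\widetilde{\mathcal W}$---is exactly right; that is also what furnishes a genuine maximizer rather than merely a supremum. One minor quibble: your remark that $\bT$ must lie in the interior is slightly overstated. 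Feasibility of $\bT$ (i.e., $\bT=\bT(g_*)$ for some graphon $g_*$) already guarantees $U_\alpha(\bT)$ is nonempty, and the lower bound goes through because any feasible graphon can be approximated in cut distance by finite graphs with nearby densities; no interior assumption is needed for the statement as written.
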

\noindent (Instead of using $s(g)$, some authors use the \emph{rate function}
$I(g):= -s(g)$, and then minimize $I$.)
The existence of a maximizing 
graphon $g=g_{\bT}$ for any constraint $\bT(g)=\bT$ was proven in \cite{RS1},
again adapting a proof in \cite{CV}.
If the densities are that of edges and $k$-star subgraphs we refer
to this maximization problem as a \emph{star model}, though we
emphasize that the result applies much more generally \cite{RS1, RS2}.

We consider two graphs {\it equivalent} if they are obtained from one
another by relabeling the vertices. For graphons, the analogous
operation is applying a measure-preserving map $\psi$ of $[0,1]$ into
itself, replacing $g(x,y)$ with $g(\psi(x),\psi(y))$, see \cite{Lov}.
The equivalence classes of graphons under relabeling are called
\emph{reduced graphons}, and 
graphons are equivalent
if and only if they have the same subgraph densities for all possible
finite subgraphs \cite{Lov}. In the remaining sections of the paper,
whenever we claim that a graphon has a property (e.g. monotonicity in 
$x$ and $y$, or uniqueness as an entropy maximizer), the caveat ``up to 
relabeling'' is implied. 

The graphons which maximize the constrained entropy can tell us what
`most' or `typical' large constrained graphs are like: if $g_{\bT}$ is
the only reduced graphon maximizing $S(g)$ with $\bT(g)=\bT$, then as
the number $n$ of vertices diverges and $\alpha_n\to 0$, exponentially
most graphs with densities $\bT_i(G)\in (\T_i-\alpha_n,\T_i+\alpha_n)$
will have reduced graphon close to $g_{\bT}$ \cite{RS1}.  This is based
on large deviations from \cite{CV}. We emphasize that this
interpretation requires that the maximizer be unique; this has been
difficult to prove in most cases of interest and is an important focus
of this work.

A graphon $g$ is called $M$-podal if there is decomposition
of $[0,1]$ into $M$ intervals (`vertex clusters') $C_j,\ j=1,2,\ldots,M$, and
$M(M+1)/2$ constants $p_{ij}$ such that
$g(x,y)=p_{ij}$ if $(x,y)\in C_i\times C_j$ (and $p_{ji}=p_{ij}$). We 
denote the length of $C_j$ by $c_j$.

\section{Technical properties of star models}
\label{SEC:k-star-prelim}

For each star model, all entropy-maximizing graphons are multipodal
with a fixed upper bound on the number of clusters, also called the {\em podality}~\cite{KRRS}.  For any fixed
podality $M$, an $M$-podal graphon is described by $N=M(M+3)/2$
parameters, namely the values $p_{ij}$ ($1\le i\le j\le M$) and the
widths $c_i$ ($1\le i\le M$) of the clusters.
When it does not cause confusion, we
will use $g$ to denote the vector
\be(c_1,\cdots,c_{M},p_{11},\cdots,p_{1M},p_{22},\cdots,p_{2M},\cdots,\cdots,p_{M-1M-1},p_{M-1M},p_{MM}),\ee
which contains all these parameters. The problem of optimizing the
graphon then reduces to a finite-dimensional calculus problem. To be
precise, let us recall that for an $M$-podal graphon, we have
\begin{equation}
	 \E(g) = \sum_{1\le i,j\le M} c_ic_j p_{ij},\ \ 
\T_k(g) = \sum_{1\le i \le M} c_i d_i^k,\ \ 
s(g) = \sum_{1\le i,j\le M} c_i c_j S_0(p_{ij}),
\end{equation} 
where $d_i = \sum_{1\le j\le M} c_j p_{ij}$ is the value of the 
degree function on the $i$th cluster. 
The problem of searching for entropy-maximizing graphons with fixed edge density $\E$ and $k$-star density $\T_k$ can now be formulated as
\begin{equation}\label{EQ:Max}
	\max_{g\in [0, 1]^N} s(g), \quad \mbox{subject to:}\quad \E(g)-\E=0, \quad \T_k(g)-\T=0, \quad  C(g) =1. 
\end{equation}
where $C(g) = \sum_{1\le j\le M} c_j$. 

The following result says that the maximization problem~\eqref{EQ:Max}
can be solved using the method of Lagrange multipliers. The existence of finite
Lagrange multipliers was previously established in \cite{KRRS}, treating
the space of graphons as a linear space of functions $[0,1]^2 \to
[0,1]$, intuitively considering perturbations of graphons localized
about points in $[0,1]^2$.
For star models we may restrict to $M$-podal graphons, as noted above,
and thus consider perturbations in the relevant parameters
$p_{ij}$ and $c_j$.

\begin{lemma}
Let $g$ be a local maximizer in~\eqref{EQ:Max}. Then for constraints
$\E,\T$ off the ER curve, there exist unique $\alpha,\beta,\gamma\in\bbR$ such that
\begin{equation}\label{EQ:EL}
\nabla s(g)-\alpha \nabla \E(g)-\beta \nabla \T_k(g) - 
\gamma \nabla C(g)=\bzero.
\end{equation}
\end{lemma}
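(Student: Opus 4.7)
The plan is to treat~\eqref{EQ:Max} as a smooth finite-dimensional optimization and invoke the classical Lagrange multiplier theorem, for which the real work is to verify constraint qualification (linear independence of the three constraint gradients) at the maximizer $g$. First, I would reduce to a finite-dimensional problem using the multipodality result of \cite{KRRS}: any entropy maximizer is $M$-podal for some uniform upper bound on $M$, so one works in $\bbR^N$ with $N=M(M+3)/2$. Since $S_0'(p)=\tfrac12\log((1-p)/p)$ diverges at $p=0,1$, any finite $\nabla s(g)$ in~\eqref{EQ:EL} requires $p_{ij}\in(0,1)$, and clusters with $c_i=0$ can be discarded to lower the podality; so one may assume $g$ lies in the interior of the parameter domain and apply smooth calculus there.

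Next I would show the three constraint gradients are linearly independent off the ER curve. Direct differentiation in the $c_i, p_{ij}$ coordinates gives
\begin{align*}
\partial_{c_n} C &= 1, \qquad \partial_{p_{ij}} C = 0,\\
\partial_{c_n} \E &= 2 d_n, \qquad \partial_{p_{ii}} \E = c_i^2, \qquad \partial_{p_{ij}} \E = 2 c_i c_j\ (i<j),\\
\partial_{c_n} \T_k &= d_n^k + k\sum_\ell c_\ell\, p_{\ell n}\, d_\ell^{k-1},\\
\partial_{p_{ii}} \T_k &= k c_i^2 d_i^{k-1}, \qquad \partial_{p_{ij}} \T_k = k c_i c_j\,(d_i^{k-1}+d_j^{k-1})\ (i<j).
\end{align*}
Suppose $a\nabla\E(g)+b\nabla\T_k(g)+c\nabla C(g)=\bzero$. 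Reading off the $p_{ii}$ components gives $a + bk\,d_i^{k-1}=0$ for every $i$ with $c_i>0$. If $b\ne 0$ this forces $d_i^{k-1}$, and hence (since $k\ge 2$ and $d_i\ge 0$) the degree $d_i$ itself, to be the same constant $d$ on every cluster. But a constant degree function then gives $\E=\int_0^1 d(x)\,dx=d$ and $\T_k=\int_0^1 d(x)^k\,dx=\E^k$, placing $(\E,\T_k)$ on the ER curve. If instead $b=0$, the $p_{ii}$ equation forces $a=0$ and a $c_n$ component forces $c=0$. Hence off the ER curve only the trivial relation holds, giving the required linear independence; existence and uniqueness of $(\alpha,\beta,\gamma)$ then follow from the classical Lagrange multiplier theorem.

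The only substantive difficulty I expect is the case analysis in the previous paragraph. The driving observation is that the ratio $\partial_{p_{ii}}\T_k/\partial_{p_{ii}}\E = k\,d_i^{k-1}$ records the per-cluster degree, so equality of this ratio across clusters is equivalent to constancy of $d(\cdot)$ — which is precisely the ER condition in a star model. Everything else (the multipodality reduction, the interiority of $g$, and the invocation of the smooth Lagrange multiplier theorem) is routine.
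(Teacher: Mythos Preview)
Your argument is correct. The paper gives no proof here, deferring to Lemma~3.5 of \cite{KRRS}, which (as the paper notes) works in the infinite-dimensional space of graphons via pointwise perturbations; you instead verify the linear-independence constraint qualification directly in the finite-dimensional $M$-podal coordinates. Your key observation---that a nontrivial linear relation among $\nabla\E,\nabla\T_k,\nabla C$ forces all cluster degrees $d_i$ to coincide, which is precisely the ER condition $\T_k=\E^k$---is clean and exploits the star structure in a way the generic argument in \cite{KRRS} does not. Two minor remarks. First, \eqref{EQ:Max} is already a finite-dimensional problem ($g\in[0,1]^N$), so invoking multipodality to ``reduce'' to finite dimensions is redundant; that reduction is what produced \eqref{EQ:Max} in the first place. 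Second, your interiority argument is phrased a bit circularly (you appeal to finiteness of $\nabla s$ in \eqref{EQ:EL}, the very equation being established); the cleaner order is to argue directly that a local maximizer cannot sit at $p_{ij}\in\{0,1\}$, since the infinite slope of $S_0$ there dominates the bounded constraint gradients and permits a feasible entropy-increasing perturbation.
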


We do not include the proof, which follows easily from that of Lemma 3.5 in~\cite{KRRS}. We also note that one can remove the variable $c_M$ and the constraint $C(g) =1$, eliminating the multiplier $\gamma$.

For convenience later, we now write down the exact 
form of the Euler-Lagrange equation~\eqref{EQ:EL}. We first verify that
\begin{eqnarray}
\label{EQ:Grad e}\frac{\partial \E}{\partial p_{ij}} = A_{ij}, & & 
\frac {\partial \E}{\partial c_i} = 2 \sum_{j=1}^{M} c_j p_{ij}= 2 d_i,\\
\label{EQ:Grad t}\frac{\partial \T_k}{\partial p_{ij}} = 
\frac{k}{2}(d_i^{k-1} + d_j^{k-1}) A_{ij}, & & 
\frac{\partial \T_k}{\partial c_i}  = d_i^k + k\sum_{j=1}^M c_j d_j^{k-1} p_{ij},\\
\label{EQ:Grad C}\frac{\partial C}{\partial p_{ij}} = 0, & & \frac{\partial C}{\partial c_i} = 1,\\
\label{EQ:Grad s}\frac{\partial s}{\partial p_{ij}} = S_0'(p_{ij}) A_{ij}, & & \frac{\partial s}{\partial c_i} = 2 \sum_{j=1}^M c_j S_0(p_{ij}),
\end{eqnarray}
where $A_{ij}= 2 c_i c_j$ if $i\neq j$ and $A_{ij}= c_i^2$ if
$i=j$. We can then write down ~\eqref{EQ:EL} explicitly as
\begin{eqnarray}
\label{cmeq0}
S_0'(p_{ij}) &=& \alpha + \beta \frac{k}{2}(d_i^{k-1} + d_j^{k-1}),
\quad 1\le i\le j\le M\\\label{cmeq}
2\sum_{j=1} c_j S_0(p_{ij}) &=& 2\alpha d_i + \beta \big(d_i^k + k\sum_{j=1}^M c_j d_j^{k-1} p_{ij}\big)+\gamma,\quad 1\le i\le M
\end{eqnarray}
These Euler-Lagrange equations, together with the constraints,
\begin{equation}\label{EQ:Constraints}
\E(g) -\E=0, \qquad \T_k(g)-\T=0,\qquad C(g)-1=0,
\end{equation}
are the optimality
conditions for the maximization
problem~\eqref{EQ:Max}.  In principle, we can solve this system to
find the maximizer $g$.
 
Next we consider the significance of the Lagrange multipliers $\alpha$
and $\beta$. Suppose that $g_0$ is the unique entropy maximizer for
$\E=\E_0$ and $\T=\T_0$. Then any sequence of graphons that maximize
entropy for $(\E,\T)$ approaching $(\E_0,\T_0)$ must approach $g_0$: this follows from upper semicontinuity of the entropy and the fact that we can
perturb $g_0$ to any nearby $(\E,\T)$ by changing some $p_{ij}$.
But if $g= g_0 + \delta g$, then
\begin{eqnarray} s(g) & = & s(g_0) + ds_{g_0} (\delta g) + O(\delta g^2) \cr  
&=& s(g_0) + \alpha d\E_{g_0}(\delta g) + \beta d\T_{g_0}(\delta g)
+ O(\delta g^2) \cr 
& = & s(g_0) + \alpha (\E-\E_0) + \beta(\T-\T_0) + O(\delta g^2).
\end{eqnarray}
That is, $\partial s(\E,\T)/\partial \E = \alpha$ and $\partial
s(\E,\T)/\partial \T = \beta$.

If $g_0$ is not a unique entropy maximizer, then
we only have 1-sided {(directional)} derivatives: 

\begin{lemma}
  The function $s(\E,\T)$ admits directional derivatives in all
  directions at all points $(e,t)$ in the interior of the
  profile. 
\end{lemma}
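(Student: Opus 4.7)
The plan is to show that the one-sided derivative
$$D_{(a,b)} s(e,t) := \lim_{h\searrow 0} \frac{s(e+ha,t+hb)-s(e,t)}{h}$$
exists and equals $\sup_{g}[\alpha(g)\,a+\beta(g)\,b]$, where the supremum runs over all entropy-maximizing graphons $g$ at $(e,t)$ and $(\alpha(g),\beta(g))$ are the Lagrange multipliers attached to $g$ by the preceding lemma. Existence of the directional derivative then reduces to the existence of this supremum, which is automatic since the set of maximizers is nonempty and compact. By \cite{KRRS}, each maximizer is multipodal with a uniform podality bound, placing the problem in a finite-dimensional parameter space $(c_i,p_{ij})$ and in the standard setting of sensitivity analysis for smooth constrained optimization.

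For the $\ge$ direction, fix a maximizer $g_0$ and, using the implicit function theorem on the parameter space, build an analytic curve $h\mapsto g_h$ with $g_h|_{h=0}=g_0$ and $(\E(g_h),\T_k(g_h))=(e+ha,t+hb)$. The Jacobian of $(\E,\T_k)$ has the required rank at $g_0$ precisely because $(\alpha(g_0),\beta(g_0))$ are finite off the ER curve. Substituting the curve into~\eqref{EQ:EL} gives $s(g_h)=s(g_0)+h[\alpha(g_0)a+\beta(g_0)b]+O(h^2)$, whence
$$\liminf_{h\searrow 0}\frac{s(e+ha,t+hb)-s(e,t)}{h} \ge \alpha(g_0)a+\beta(g_0)b.$$
Supping over $g_0$ yields the lower bound.

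For the $\le$ direction, take any sequence $h_n\searrow 0$ and maximizers $g^{(n)}$ at $(e+h_n a,t+h_n b)$, and pass to a subsequence along which $g^{(n)}\to g^*$, a maximizer at $(e,t)$; this uses compactness in the parameter topology together with upper semicontinuity of $s$ and continuity of the constraints. Writing $v_n=g^{(n)}-g^*$ and expanding $s$, $\E$, $\T_k$ to second order about $g^*$, with the Euler--Lagrange equation at $g^*$ used to eliminate the linear term in $v_n$, produces
$$s(g^{(n)}) - s(g^*) = h_n[\alpha(g^*)a+\beta(g^*)b] + \tfrac12\langle v_n, H\, v_n\rangle + o(\|v_n\|^2),$$
where $H$ is the Hessian of the Lagrangian $s-\alpha(g^*)\E-\beta(g^*)\T_k$ at $g^*$. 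Negative definiteness of $H$ on the tangent space to the constraint manifold, combined with the lower bound already shown at $g^*$, forces $\|v_n\|=O(h_n)$ and hence collapses the quadratic and remainder terms to $o(h_n)$. The resulting $\limsup$ is bounded by $\alpha(g^*)a+\beta(g^*)b\le\sup_{g_0}[\alpha(g_0)a+\beta(g_0)b]$, closing the argument.

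The hard part is the bootstrap $\|v_n\|=O(h_n)$, which requires strict (not merely weak) negative definiteness of $H$ on the constraint tangent space. At a constrained maximum the second-order necessary condition supplies only semi-definiteness; upgrading to definiteness is where most of the work lies. The key input is that $S_0''(p)<0$ strictly, so the $p_{ij}$-block of $H$ is already strictly negative; the $c_i$-directions are then absorbed using the total-mass constraint $C=1$ and a local isolation argument for $g^*$ within the bounded-podality parameter space. At exceptional $(e,t)$ where strict definiteness might fail (e.g., along a curve of maximizers), one can instead invoke a general sensitivity theorem for finite-dimensional smooth problems with compact optimizer set and Mangasarian--Fromovitz constraint qualification, which our setup satisfies off the ER curve.
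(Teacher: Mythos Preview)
Your target formula $D_{(a,b)} s(e,t)=\sup_g[\alpha(g)a+\beta(g)b]$ coincides with the paper's (the paper's ``proof'' is little more than this assertion), and your lower bound via an implicit-function curve through a fixed maximizer $g_0$ is a correct, more explicit version of that sketch.

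The problem is your upper bound. You expand $s(g^{(n)})-s(g^*)$ to second order and then need $\|v_n\|=O(h_n)$, which you try to extract from strict negative definiteness of the Lagrangian Hessian $H$ on the constraint tangent space. As you yourself note, second-order necessary conditions give only semi-definiteness; the observation that the $p_{ij}$-block is strictly negative (since $S_0''<0$) does not control mixed $p$--$c$ directions, and at points with a continuum of maximizers such null directions of $H$ on the tangent space genuinely exist. The fallback to a black-box sensitivity theorem under MFCQ is not a proof either: directional differentiability of the value function from MFCQ alone typically requires additional structure (convexity, or isolatedness of maximizers), which is exactly what is in doubt here.

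You can close the gap without any second-order analysis by reusing your lower-bound construction in reverse. Having chosen $g^{(n)}\to g^*$, run the IFT curve \emph{through $g^{(n)}$} in the direction $-(a,b)$: this produces a graphon feasible for $(e,t)$ with entropy $s(g^{(n)})-h_n[\alpha^{(n)}a+\beta^{(n)}b]+O(h_n^2)$, the $O$-constant uniform for large $n$ since the IFT data vary continuously near $g^*$. Hence
\[
s(e,t)\ \ge\ s(g^{(n)})-h_n\bigl[\alpha^{(n)}a+\beta^{(n)}b\bigr]+O(h_n^2),
\]
and dividing by $h_n$ and using $(\alpha^{(n)},\beta^{(n)})\to(\alpha(g^*),\beta(g^*))$ (the multipliers depend continuously on the multipodal parameters) gives the $\limsup$ bound directly.
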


\begin{proof}
The change in entropy in a given direction is obtained
by maximizing $ds=\alpha d\E + \beta d\T$ over all entropy maximizers at $(\E_0,\T_0)$. That is, when fixied $\E$ and increasing $\T$, we get the
largest $\beta$ of all the graphons that maximize entropy at $(\E_0,\T_0)$, 
and when decreasing $\T$ we get the smallest $\beta$. Likewise, when increasing
or decreasing $\E$ we get the largest or smallest values of $\alpha$,
and when doing a directional derivative in the direction $(v_1,v_2)$, we get
the largest value of $v_1 \alpha + v_2 \beta$. 
\end{proof}

Existence of directional derivatives implies the fundamental theorem of calculus, so for fixed $\E$ we can write
\begin{equation}\label{ftc}
 s(\E,\T) = s(\E, \E^k) + \int_{\E^k}^\T \beta(g_{max}(\E,\T)) d\T,
\end{equation}
where $g_{max}(\E,\T)$ is the {entropy-maximizing graphon at $(\E,\T)$ that maximizes its right derivative (with respect to $\T$)}. 

\medskip

Before proving Theorem \ref{Main-Thm-Simple} for $k$-stars, we {record}
some properties of the function $\psi_k(\E, \tilde \E)$  of \eqref{firstrat} and its critical
points. 

\begin{theorem}\label{thm-zeta}
For fixed $k$ and $\E$, there is a unique solution to $\partial \psi_k'(\E,\tilde \E)/\partial \tilde \E=0$, which we 
denote $\tilde \E=\zeta_k(\E)$. The function $\zeta_k$ is a strictly decreasing, 
with nowhere-vanishing derivative and 
with fixed point at $\E=(k-1)/k$.  Furthermore, $\zeta_k$ is an involution:
$\tilde \E = \zeta_k(\E)$ if and only if $\E = \zeta_k(\tilde \E)$. 
\end{theorem}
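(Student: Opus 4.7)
The plan is to reduce $\partial_{\tilde\E}\psi_k=0$ to a symmetric equation in $(\E,\tilde\E)$, from which the involution property falls out, then to locate the fixed point via Taylor expansion and handle uniqueness and monotonicity separately. Writing $A(u,v):=S_0(u)-S_0(v)-S_0'(v)(u-v)$ and $B_k(u,v):=u^k-v^k-kv^{k-1}(u-v)$, so that $\psi_k(\E,\tilde\E)=2A(\tilde\E,\E)/B_k(\tilde\E,\E)$, the critical-point equation $f'h=fh'$ reads
\[
(S_0'(\tilde\E)-S_0'(\E))\,B_k(\tilde\E,\E)=k(\tilde\E^{k-1}-\E^{k-1})\,A(\tilde\E,\E).
\]
The two elementary identities $A(u,v)+A(v,u)=(S_0'(u)-S_0'(v))(u-v)$ and $B_k(u,v)+B_k(v,u)=k(u^{k-1}-v^{k-1})(u-v)$ reduce this (for $\tilde\E\ne\E$) to the symmetric form $\psi_k(\E,\tilde\E)=\psi_k(\tilde\E,\E)$. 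Once uniqueness of the critical point $\tilde\E=\zeta_k(\E)$ is known, this symmetry says $\E$ is also the critical point of $\psi_k(\zeta_k(\E),\cdot)$, and the involution $\zeta_k\circ\zeta_k=\mathrm{id}$ is immediate.

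For the fixed point I Taylor-expand $\psi_k(\E,\E+t)=f/h$ about $t=0$. Both $f$ and $h$ vanish to second order: with $f=at^2+bt^3+O(t^4)$, $h=ct^2+dt^3+O(t^4)$, and coefficients $a=S_0''(\E)$, $b=S_0'''(\E)/3$, $c=k(k-1)\E^{k-2}/2$, $d=k(k-1)(k-2)\E^{k-3}/6$, the linear-in-$t$ coefficient of $\psi_k$ is $(bc-ad)/c^2$, proportional to $\E\,S_0'''(\E)-(k-2)S_0''(\E)$. Using the explicit ratio $S_0'''(w)/S_0''(w)=(2w-1)/[w(1-w)]$, this vanishes iff $(2\E-1)/(1-\E)=k-2$, i.e.\ $\E=(k-1)/k$, the unique fixed point.

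The main obstacle is uniqueness. I would study the antisymmetric $\Delta(\tilde\E):=\psi_k(\E,\tilde\E)-\psi_k(\tilde\E,\E)$ on $(0,1)$ for fixed $\E\ne(k-1)/k$. Boundary analysis gives $\Delta\to+\infty$ as $\tilde\E\to 0^+$ or $1^-$, because $S_0'\to\pm\infty$ at the endpoints produces a divergent $A(\E,\tilde\E)$ forcing $\psi_k(\tilde\E,\E)\to-\infty$, while $\psi_k(\E,\tilde\E)$ stays finite. At $\tilde\E=\E$, $\Delta$ has a simple zero with linear coefficient $-r'(\E)/3$, where $r(s):=\psi_k(s,s)=-1/[k(k-1)s^{k-1}(1-s)]$ is unimodal with unique maximum at $s=(k-1)/k$; this pins the non-trivial sign change of $\Delta$ on the side of $\E$ opposite $(k-1)/k$, producing at least one critical point $\zeta_k(\E)$ with the decreasing structure built in. For uniqueness I would use the integral representation
\[
\psi_k(\E,\tilde\E)=\frac{\int_\E^{\tilde\E}(\tilde\E-s)\,k(k-1)s^{k-2}\,r(s)\,ds}{\int_\E^{\tilde\E}(\tilde\E-s)\,k(k-1)s^{k-2}\,ds},
\]
exhibiting $\psi_k$ as a weighted mean of the unimodal $r$, together with the identity $\partial^2_{\tilde\E}\psi_k(\E,\tilde\E^*)=[S_0''(\tilde\E^*)B_k(\tilde\E^*,\E)-k(k-1)(\tilde\E^*)^{k-2}A(\tilde\E^*,\E)]/B_k(\tilde\E^*,\E)^2$ valid at any critical point $\tilde\E^*$, to check that $\partial^2_{\tilde\E}\psi_k<0$ there. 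Every critical point is then a strict local maximum, and two such maxima would be separated by a local minimum where $\partial^2\ge 0$, giving a contradiction; so $\zeta_k(\E)$ is unique.

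Finally, with $\zeta_k(\E)$ non-degenerate, the implicit function theorem applied to the analytic equation $\partial_{\tilde\E}\psi_k=0$ shows $\zeta_k$ is analytic with nowhere-vanishing derivative. Differentiating $\zeta_k\circ\zeta_k=\mathrm{id}$ yields $\zeta_k'(\E)\,\zeta_k'(\zeta_k(\E))=1$, so $\zeta_k'$ has constant sign; the sign is negative by the placement of $\zeta_k(\E)$ on the opposite side of $(k-1)/k$ from $\E$ established above, and so $\zeta_k$ is strictly decreasing.
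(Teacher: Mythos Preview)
Your treatment of the symmetry and the fixed point is correct and, in the case of the involution, arguably cleaner than the paper's: you reduce $\psi_k'=0$ to the symmetric identity $\psi_k(\E,\tilde\E)=\psi_k(\tilde\E,\E)$ via the additive relations for $A$ and $B_k$, whereas the paper argues with the pair of equations $N=rD$, $N'=rD'$ and their behavior under swapping variables. Your Taylor computation at $\tilde\E=\E$ matches the paper's Step~1.

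The genuine gap is in uniqueness. Your argument hinges on the assertion that $\partial_{\tilde\E}^2\psi_k<0$ at \emph{every} critical point, but you do not prove this; you only write down the formula and say ``to check''. Unwinding your identity, the sign of $\psi_k''$ at a critical point $\tilde\E^*$ is the sign of $r(\tilde\E^*)-\psi_k(\E,\tilde\E^*)$, i.e.\ you need the weighted mean of the unimodal $r$ over $[\E,\tilde\E^*]$ to strictly exceed its endpoint value $r(\tilde\E^*)$. That is false in general for a unimodal function (think of $\tilde\E^*$ near the peak $(k-1)/k$), so something specific to the critical-point condition must be used---and you have not supplied it. Your integral representation of $\psi_k$ as a weighted mean of $r$ does not by itself deliver this inequality. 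The downstream steps (implicit function theorem for nowhere-vanishing $\zeta_k'$, and the sign argument for monotonicity) all rest on this unproven non-degeneracy.

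For comparison, the paper handles exactly this point in its Step~2 by an integral/moment trick: writing $N'D''-D'N''$ and $ND''-DN''$ as integrals of the same signed density, the first being its total mass and the second its first moment about $\tilde\E$, and observing that a signed density with a single sign change cannot have both vanish. This gives $\psi_k''\ne 0$ at any critical point. Uniqueness is then obtained not by your local-max/local-min dichotomy but by a continuity argument in the \emph{real} parameter $k$: for $k=2$ one has $\zeta_2(\E)=1-\E$ explicitly, and if multiple critical points ever appeared, at the infimum of such $k$ two of them would coalesce, forcing $\psi_k'=\psi_k''=0$ and contradicting Step~2. The same Step~2 computation, with the roles of $\E$ and $\tilde\E$ swapped, is reused to show $\dot f\ne 0$ and hence $\zeta_k'\ne 0$. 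If you want to salvage your route, you need an honest proof that $\psi_k''\ne 0$ at critical points; the paper's moment argument is the cleanest way I know to get it.
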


Even though the proof is elementary we will need some parts of it later, so we give it here.

\begin{proof}
Fix $k \ge 2$ and let 
\begin{eqnarray} N(\E,\tilde \E) & = & 2[S_0(\tilde \E) - S_0(\E) - 
S_0'(\E)(\tilde \E - \E)] \cr 
D(\E,\tilde \E) & = & \tilde \E^k - \E^k - k \E^{k-1}(\tilde \E - \E)
\end{eqnarray}
be the numerator and denominator of the function $\psi_k(\E, \tilde \E) = N/D$.
Note that these definitions make sense for all real values of $k$, not just
for integers. When taking derivatives of $N$, $D$ and $\psi$, we will denote
a derivative with respect to the first variable by a dot, and a derivative with respect to the
second variable by ${}'$. That is, 
$D'(\E,\tilde \E) =\partial D/\partial \tilde \E$ and 
$\dot D(\E, \tilde \E) = \partial D/\partial \E$. 
As noted earlier, this definition of $\psi_k$
has a removable singularity at $\tilde \E = \E$, which we fill in by defining
\be
 \psi_k(\E,\E) = N''(\E,\E)/D''(\E,\E) = 2S_0''(\E)/[k(k-1)\E^{k-2}].
\ee
The denominator
$D$ vanishes only at $\tilde \E = \E$. 

Some useful explicit derivatives are:
\begin{eqnarray}
& N' =  2[S_0'(\tilde \E) - S_0'(\E)], 
& N''  = 2 S_0''(\tilde \E) = \frac{-1}{\tilde \E (1-\tilde \E)}, \cr 
& \dot N  =  -2S_0''(\E)(\tilde \E - \E), \qquad 
& \dot N'  =  -2 S_0''(\E), \cr 
& D'  =  k[\tilde \E^{k-1} - \E^{k-1}], \qquad 
& D''  =  k(k-1) \tilde \E^{k-2}, \cr 
& \dot D  =  -k(k-1) \E^{k-2}(\tilde \E - \E), \qquad 
& \dot D'  =  -k(k-1) \E^{k-2}.
\end{eqnarray}
Note that $D$ and $N$ both
vanish when $\tilde \E = \E$, so we can write
\be
N(\E, \tilde \E)  = \int_\E^{\tilde \E} N'(\E,x) dx = \int_{\tilde \E}^\E
\dot N(x, \tilde \E) dx,
\ee
and similarly for $D(\E, \tilde \E)$.

We proceed in steps:
\begin{itemize}[leftmargin=\dimexpr 26pt+6mm]

\item[Step 1.] Analyzing $\psi$ near $\tilde \E = \E$ to see that 
$\psi_k'(\E,\E) = 0$ only when $\E = (k-1)/k$. 

\item[Step 2.] Showing that we can never have $\psi_k'=\psi_k''=0$. 

\item[Step 3.] Showing that the equation $\psi_k'(\E, \tilde \E)$ is 
symmetric in $\E$ and $\tilde \E$, implying that $\zeta_k$ is an involution. 

\item[Step 4.] Showing that $\psi_k$ has a unique critical point. 

\item[Step 5.] Showing that $d \zeta_k/d\E$ is never zero. 
\end{itemize}

The following calculus fact will be used repeatedly. When $D \ne 0$, 
$\psi_k'=0$ is equivalent to ${N}/{D} = {N'}/{D'}$, and 
$\psi_k'=\psi_k''=0$ is equivalent to ${N}/{D} = {N'}/{D'}
={N''}/{D''}$. This follows from the quotient rule:
\begin{eqnarray} 
\psi' & = & \frac{DN'-ND'}{D^2}, \cr 
\psi'' & = & \frac{DN'' - ND''}{D^2} -2\frac{D'(DN'-ND')}{D^3}.
\end{eqnarray}

\paragraph{Step 1.} Since $N$ and $D$ have double roots at $\tilde \E = \E$,
we can do a Taylor series for both of them near $\tilde \E = \E$:
\begin{eqnarray}
 \psi_k(\E,\tilde \E) &=& \frac{N''(\E,\E)(\tilde \E - \E)^2/2 + N'''(\E,\E)(\tilde \E-\E)^3/6 + \cdots}{D''(\E,\E)(\tilde \E - \E)^2/2 + D'''(\E,\E)(\tilde \E-\E)^3/6 + \cdots}
\cr
&=& \frac{N''(\E,\E) + N'''(\E,\E)(\tilde \E-\E)/3 + \cdots}
{D''(\E,\E) + D'''(\E,\E)(\tilde \E-\E)/3 + \cdots}.
\end{eqnarray}
$\psi_k'(\E,\E)=0$ is then equivalent to 
\begin{eqnarray}
N''(\E,\E)D'''(\E,\E) &=& N'''(\E,\E)D''(\E,\E) \cr && \cr 
\frac{-k(k-1)(k-2)\E^{k-3}}{\E(1-\E)} & = & \frac{-k(k-1)\E^{k-2}(1-2\E)}{\E^2(1-\E)^2}\cr && \cr 
(k-2)(1-\E) & = & 1-2\E \cr 
k \E & = & k-1.
\end{eqnarray}

\paragraph{Step 2.} If $\psi_k'=\psi_k''=0$, then we must have 
$N'D''=D'N''$ and $ND''=DN''$. We will explore these in turn.  
We write
\be 0 = N'D'' - D'N'' = \int_{\tilde\E}^\E D''(\E,\tilde \E) \dot N'(x,\tilde \E) 
- N''(\E,\tilde \E) \dot D'(x,\tilde \E) dx.\ee
Explicitly, this becomes
\be\label{signedmass} 0 = \int_{\tilde \E}^\E \frac{k(k-1)}{\tilde \E(1-\tilde \E)x(1-x)}
\left [\tilde \E^{k-1}(1-\tilde \E) - x^{k-1}(1-x) \right ] dx.\ee

The function $x^{k-1}(1-x)$ has a single maximum at $x=(k-1)/k$. If both
$\E$ and $\tilde \E$ are on the same side of this maximum, then the integrand
will have the same sign for all $x$ between $\tilde \E$ and $\E$, and the 
integral will not be zero. Thus we must have $\E < (k-1)/k < \tilde \E$, 
or vice-versa, and we must have $\E^{k-1}(1-\E) < \tilde \E^{k-1}(1-\tilde \E)$.  
Note that in this case the integrand changes sign exactly once. 
 
Now we apply the same sort of analysis to the other equation:
\be 0 = ND'' - DN'' = \int_{\tilde \E}^\E D''(x,\tilde \E) \dot N(x,\tilde \E) 
- N''(x,\tilde \E) \dot D(x,\tilde \E) dx.\ee
Explicitly, this becomes
\be 0 = \int_{\tilde \E}^\E \frac{k(k-1)}{\tilde \E(1-\tilde \E)x(1-x)}
\left [\tilde \E^{k-1}(1-\tilde \E) - x^{k-1}(1-x) \right ](\tilde \E - x) dx.\ee
This is the same integral as before, only with an extra factor of 
$(\tilde \E - x)$. If we view the first integral \eqref{signedmass} as a mass distribution
(with total mass zero), then the second integral is  (minus) the first 
moment of this mass distribution relative to the endpoint $\tilde \E$. 
But we have already seen that the 
distribution changes sign exactly once, and so 
must have a non-zero first moment. 
This is a contradiction. 

\paragraph{Step 3.} If $ND'=DN'$, then $N/D = N'/D'$. Call this
common ration $r$. Then 
\be
N = rD \quad \hbox{ and } \quad  N'  =  rD'.
\ee
Note that $N'$ and $D'$
are odd under interchange of $\E$ and $\tilde \E$, so the second
equation is invariant under this interchange.  Furthermore, we
have $(\tilde \E-\E)N' -N = r [ (\tilde \E - \E)D' - D]$. However,
$(\tilde \E - \E)N' - N$ is the same as $N$ with the roles of $\E$ and
$\tilde \E$ reversed, while $(\tilde \E - \E)D' - D$ is the same as
$D$ with the roles of $\E$ and $\tilde \E$ reversed. Thus the two equations
are satisfied for $(\E, \tilde \E)$ if and only if they are satisfied for
$(\tilde \E, \E)$. 

\paragraph{Step 4.} For $k=2$ we explicitly compute that $\psi_2'=0$ only at $\tilde \E = 1-\E$. 
If $k_{min}$
is the infimum of all values of $k$ for which $\psi_k$ has multiple critical
points, then at a critical point of $\psi_{k_{min}}$ we must
have $\psi_k'=\psi_k''=0$, which is a contradiction. Thus $k_{min}$ 
does not exist,
and $\psi_k$ has a unique critical point for all $k \ge 2$. In particular, 
$\zeta_k$ is a well-defined function.

\paragraph{Step 5.} The function $\zeta_k$ is defined by the condition
that $D N' - N D' = 0$ (and $\tilde \E \ne \E$, except when $\E = (k-1)/k$).
Let $f(\tilde e, e) = DN' - ND' = D^2 \psi'$. Moving along the curve
$\tilde \E = \zeta_k(\E)$ (that is, $f=0$), we differentiate implicitly:
\be
 0 = df = \dot f d\E + f' d\tilde \E,
\ee
so 
\be
 \frac{d \tilde \E}{d \E} = \frac{-\dot f}{f'}.
\ee

We compute $f' = D N'' - N D''.$ This is nonzero by Step 2. We also have
\begin{eqnarray} \dot f & = & D \dot N' - \dot N D' + \dot D N' - N \dot D' \cr
& = & -2S_0''(\E) (D - (\tilde \E-\E)D)' + k(k-1)\E^{k-2}(N - (\tilde \E-\E)N') \cr
& = & 2 S_0''(\E) [\E^k-\tilde \E^k + k(\tilde \E-\E)\tilde \E^{k-1}]
-2k(k-1)\E^{k-2}[S_0(\E)-S_0(\tilde \E)+(\tilde \E-\E)S_0'(\tilde \E)] \cr 
& = & D(\tilde \E,\E)N''(\tilde \E,\E) - N(\tilde \E,\E)D''(\tilde \E,\E).
\end{eqnarray}
The arguments in the last line are written in the correct order!  That
is, $\dot f$ is the same as $f'$, only with the roles of $\E$ and
$\tilde \E$ reversed. Since the equation $f=0$ is symmetric in $\E$
and $\tilde \E$, the argument of Step 2 can be repeated to show that
$\dot f \ne 0$.

Since $d\tilde \E/d\E$ is never zero, and since $d \tilde \E/d\E=-1$
at the fixed point (by symmetry), $\zeta_k'(\E) = d\tilde \E/d\E$ must
always be negative.
\end{proof}

\section{Theorem {\ref{Main-Thm-Simple}} for $k$-stars}
\label{SEC:k-stars}

\begin{theorem}\label{thm1}
Let $H$ be a $k$-star and suppose that $\E \ne (k-1)/k$.  
Then there exists a number $\T_0 > \E^k$ such that for all $\T
\in (\E^k ,\T_0)$, the entropy-optimizing graphon at $(\E,\T)$ is
unique and bipodal.  The parameters $(c, p_{11}, p_{12}, p_{22})$ are analytic functions of $\E$ and $\T$. 
As $\T$ approaches $\E^k$ from above, $p_{22} \to \E$, $p_{12} \to \zeta_k(\E)$, $p_{11}$ satisfies 
$S_0'(p_{11}) = 2S_0'(p_{12}) - S_0'(p_{22})$ and $c=O(\T-\E^k)$.
\end{theorem}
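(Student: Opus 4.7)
The plan is to combine a leading-order asymptotic expansion near the ER curve with the implicit function theorem and a uniqueness argument based on the multipodality bound of \cite{KRRS}.

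First, the Euler--Lagrange equation \eqref{cmeq0}, evaluated on any bipodal critical point at $(i,j)\in\{(1,1),(1,2),(2,2)\}$, immediately yields the identity $S_0'(p_{11})+S_0'(p_{22})=2 S_0'(p_{12})$: adding the $(1,1)$ and $(2,2)$ equations and subtracting twice the $(1,2)$ equation, the $\beta$-terms cancel since $k d_1^{k-1}+k d_2^{k-1}=2\cdot (k/2)(d_1^{k-1}+d_2^{k-1})$. This is exact, not merely asymptotic, and persists along the entire analytic family once constructed.

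Next I would expand the bipodal graphon about $c=0$. Solving the edge constraint $\E=c^2 p_{11}+2c(1-c)p_{12}+(1-c)^2 p_{22}$ for $p_{22}$ gives $p_{22}=\E+2c(\E-p_{12})+O(c^2)$. Substituting into the formulas for $\T_k(g)$ and $s(g)$ and collecting terms produces
\[
\T-\E^k = c\,D(\E,p_{12})+O(c^2),\qquad s-S_0(\E) = c\,N(\E,p_{12})+O(c^2),
\]
with $N,D$ as in the proof of Theorem~\ref{thm-zeta}; in particular $p_{11}$ enters only at order $c^2$. The leading-order ratio of entropy gain to $\T_k$-gain is therefore $\psi_k(\E,p_{12})=N/D$, whose unique maximum in $p_{12}$ is $\zeta_k(\E)$. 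This fixes the limit values $p_{12}\to\zeta_k(\E)$ and $p_{22}\to\E$, and the identity of the previous paragraph then fixes $p_{11}\to p_{11}^*$.

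Third, the limiting data $(c,p_{11},p_{12},p_{22})=(0,p_{11}^*,\zeta_k(\E),\E)$ serve as the base point for an implicit-function argument. Treating $c$ as the small parameter and rescaling the gap variable $(p_{22}-\E)/c$ so that the $c\to 0^+$ system is regular, the nondegeneracy required to extend reduces to $\psi_k''(\E,\zeta_k(\E))\ne 0$ together with $D(\E,\zeta_k(\E))\ne 0$. Both are established in the proof of Theorem~\ref{thm-zeta}, and both require $\zeta_k(\E)\ne\E$, i.e.\ $\E\ne(k-1)/k$. Since every function in the E--L system \eqref{cmeq0}--\eqref{cmeq} and the constraints \eqref{EQ:Constraints} is real-analytic, the IFT produces a real-analytic family of bipodal critical points for $\T\in(\E^k,\T_0)$ with $c=(\T-\E^k)/D(\E,\zeta_k(\E))+O((\T-\E^k)^2)$ and the asserted limit behavior.

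The main obstacle is uniqueness: excluding $M$-podal competitors with $M\ge 3$. Here I would use upper semicontinuity of $s$ and the fact that the constant graphon is the unique maximizer on the ER curve to conclude that any maximizing sequence with $\T_n\searrow\E^k$ converges to $g\equiv\E$. The multipodality bound of \cite{KRRS} then lets us pass to a subsequence of fixed podality $M$ with convergent parameters. The parallel asymptotic expansion for the $M$-podal case (one large cluster of size $1-c$ and small clusters of sizes $c_i$ with $\sum_{i<M} c_i=c$) gives
\[
\T-\E^k = \sum_{i<M} c_i\, D(\E,\hat p_i)+O(c^2),\qquad s-S_0(\E) = \sum_{i<M} c_i\, N(\E,\hat p_i)+O(c^2),
\]
where $\hat p_i$ is the limit of $p_{iM}$. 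Since $\psi_k(\E,\cdot)$ has a strict unique maximum at $\zeta_k(\E)$, any such weighted combination of $N/D$ ratios is strictly smaller than $\psi_k(\E,\zeta_k(\E))$ unless all $\hat p_i$ coincide with $\zeta_k(\E)$, in which case the full E--L system forces the $M-1$ small clusters to collapse into one, recovering the bipodal solution already constructed. This rules out all competitors for $\T$ sufficiently close to $\E^k$ and completes the proof.
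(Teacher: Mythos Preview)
Your leading-order analysis and the implicit-function step are essentially correct and parallel the paper's approach. The identity $S_0'(p_{11})+S_0'(p_{22})=2S_0'(p_{12})$, the identification of $\psi_k$ as the governing ratio, and your nondegeneracy conditions ($\psi_k''(\E,\zeta_k(\E))\ne 0$ and $D(\E,\zeta_k(\E))\ne 0$) are exactly what the paper verifies.

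The genuine gap is in your last paragraph. Your first-order expansion shows that every small-cluster degree must converge to $\zeta_k(\E)$ and every large-cluster degree to $\E$, but it does \emph{not} show there is only one cluster of each type. At order $c$ the entropy and $\T_k$ increments depend only on $\sum_{i<M} c_i$ and the common limit $\zeta_k(\E)$; the inter-small-cluster data $p_{ij}$ for $i,j<M$, and the splitting of $c$ among the $c_i$, first appear at order $c^2$. So your $O(c)$ comparison cannot distinguish a genuine $M$-podal competitor (several small clusters all near $\zeta_k(\E)$, or several large clusters all near $\E$) from the bipodal one. Your assertion that ``the full E--L system forces the $M-1$ small clusters to collapse into one'' is precisely the missing step, and it is not automatic: the bipodal IFT you set up says nothing about $M$-podal critical points, and at the degenerate base point (several clusters of size zero with identical limiting degrees) a direct $M$-podal IFT is unavailable.

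The paper closes this gap with a separate second-order argument (its Step~2). One writes the optimizer as $g_b+\Delta g_f$, where $g_b$ is the bipodal graphon obtained by averaging $g$ over the four quadrants and $\Delta g_f$ has zero average on each quadrant, and shows that
\[
\frac{s(g)-s(g_b)}{\T_k(g)-\T_k(g_b)} \le \beta_0 < \beta
\]
for some $\beta_0$ independent of $\Delta g_f$. This forces $\Delta g_f=0$, i.e.\ exact bipodality, and simultaneously rules out multiple Type~II clusters. The computation reduces $\Delta g_f$ to a rank-one form in each quadrant and then to a quadratic optimization in four scalars; the optimal ratio is shown to tend to $\psi_k(\E,\E)$ or $\psi_k(\tilde\E,\tilde\E)$ as $c\to 0$, both strictly below $\psi_k(\E,\zeta_k(\E))$. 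Without an argument at this second-order level your proof does not close.
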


\begin{proof}
The entropy-maximizing graphon for each $(\E,\T)$ is multipodal \cite{KRRS}, and the parameters $\{c_j\}$ and $\{p_{12}\}$
must satisfy the optimality conditions \eqref{cmeq0}, \eqref{cmeq}. 
The first step of the proof is to estimate the terms in the optimality
equations to within $o(1)$. This will determine the solutions to
within $o(1)$ and demonstrate that our optimizing graphon is close to
bipodal of the desired form. The second step, based on a separate
argument, will show that the optimizer is exactly bipodal. 
The third step shows that the optimizer is in fact unique.

In doing our asymptotic analysis, our small parameter is $\Delta \T :=
\T - \E^k$.  But we could just as well use $\Delta s := s(g) -
S_0(\E)$ or the squared $L^2$ norm of $\Delta g := g - g_0$, where
$g_0(x,y) = \E$ (here $g$ denotes the graphon as a function $[0,1]^2
\to [0,1]$, not a vector of multipodal parameters.) {We claim that} these are all of
the same order. Through 
arguments found in
\cite{RS2}, one can bound $\Delta \T$ above by a multiple of $\|
\Delta g\|^2$, and bound $|\Delta s|$ below by a multiple of $\|
\Delta g \|^2$. By considering a bipodal graphon with
$p_{11}=p_{12}=\zeta_k(\E)$ and $p_{22}$ close to $\E$, we can bound
$|\Delta s|$ above by a constant times $\Delta \T$. This shows that
$O(\Delta s) = O(\Delta \T)$, and $O(\| \Delta g\|^2)$ is trapped in
between.

Order the clusters so that the largest cluster is the last cluster {(of length $c_M$)}.  By subtracting the 
equation \eqref{cmeq} 
for $c_M$ from 
the equations for $c_j$, we eliminate $\gamma$ from our equations:
\begin{eqnarray}\label{KKT1}
S_0'(p_{ij}) & = & \alpha + \frac{k}{2} \beta(d_i^{k-1} + d_j^{k-1}) \cr 
2 \sum_{j=1}^M c_j \left ( S_0(p_{ij}) \!-\! S_0(p_{Mj}) \right ) &=& 2 \alpha 
(d_i\!-\!d_M) + \beta
\left ( d_i^k \!-\! d_M^k \!+\! k \sum_{j=1}^M c_j d_j^{k-1} (p_{ij}\!-\!p_{Mj})\right ).
\end{eqnarray}

\paragraph{Step 1.} Since $\|\Delta g\|$ is small, the area of the
region where $g(x,y)$ differs substantially from $\E$ must be
small. Thus all clusters must either have $d_i$ close to $\E$ or $c_i$
close to zero (or both). We call a cluster Type I if $c_i$ is close to
0 and Type II if $d_i$ is close to $\E$. (If a cluster meets
both conditions, we arbitrarily throw it into one camp of the
other). The first equation in (\ref{KKT1}) implies that, for fixed
$i$, the values of $p_{ij}$ are nearly constant for all $j$ of Type
II. Since the $c_j$'s are small for $j$ of Type I, this common value
must be close to $d_i$. To within $o(1)$, our equations then simplify
to
\begin{eqnarray}\label{ELintegrated}
  S_0'(d_i) & = & \alpha + \frac{k}{2}\beta(d_i^{k-1}+ \E^{k-1}), \cr 
  S_0(d_i)-S_0(\E) & = & \alpha (d_i - \E) + \beta [d_i^k - \E^k + k\E^{k-1}(d_i-\E)].
\end{eqnarray}
Since $d_M = \E + o(1)$, the first of those equations applied to
$d_M$ implies that
\begin{equation}
 \alpha + k \E^{k-1} \beta = S_0'(\E) + o(1).
\end{equation}
We can thus replace $\alpha$ with $S_0'(\E) - k\E^{k-1} \beta + o(1)$ throughout. 
This gives the equations (again with $o(1)$ errors):
\begin{eqnarray}\label{ELsimple}
2(S_0'(d_i)-S_0'(\E)) & = & k\beta (d_i^{k-1} - \E^{k-1}), \cr 
2 [S_0(d_i)-S_0(\E) - S_0'(\E) (d_i-\E)] & = & 
\beta [d_i^k - \E^k - k \E^{k-1} (d_i -\E)].
\end{eqnarray}

There are two solutions to these equations.  One is simply to have 
$d_i=\E$, in which case both equations say $0=0$. Indeed, we already know that there must
be clusters with $d_i$ close to $\E$. In looking for solutions with
$d_i \ne \E$, the second equation says that 
$\beta = \psi_k(\E,d_i)$.  

We can also divide the first equation by the second to eliminate $\beta$.  This gives an equation that is algebraically equivalent to $\partial \psi_k(\E,d_i)/ \partial d_i=0$. 
In other words, $d_i$ must be the unique critical point {$\zeta_k(\E)$}
of $\psi_k$, and $\beta$ must be the critical value.  
In fact, the critical point is a maximum of $\psi_k$.  Remember that $s(\E,\T) = s(\E,\E^k) + \int_{\E^k}^{\T} \beta$ from (\ref{ftc}).  
Since the computation of $\beta$ is
independent of $\Delta \T$ (to lowest order), we have $s(\E,\T)-s(\E,\E^k)
= \beta \delta \T + o(\Delta \T)$, so maximizing $\beta$ is tantamount
to maximizing $s$. 
 
\paragraph{Step 2.} We have shown so far that
the optimizing graphon is multipodal, with all of the clusters either
having $d_i$ close to $\zeta_k(\E)$ or close to $\E$. We refine our
definitions of Type I and Type II so that all the clusters with
$d_i$ close to $\zeta_k(\E)$ are Type I and all the clusters
with $d_i$ close to $\E$ are Type II.  Since the value of
$g(x,y)$ is determined by $d(x)$ and $d(y)$ (and $\alpha$ and
$\beta$), this means that the optimizing graphon is nearly constant
(i.e. with pointwise small fluctuations) on each quadrant. We order the 
clusters so that the Type I clusters come before Type II. 

Let $g_b$ be the bipodal graphon obtained by averaging over each
quadrant.  Let $\Delta g_f = g-g_b$. (The f stands for ``further''.) 
We will show
that having $\Delta g_f$ non-zero is an inefficient way to increase
$\T$, that is, $(s(g)-s(g_b))/(\T(g)-\T(g_b))$ is less than $\beta$. {This will imply that $\Delta g_f =0$ and
so $g=g_b$.}

Since $\T = \int_0^1 d(x)^k dx$, the changes in $\T$ are a function only of the marginal distributions of $\Delta g_f$. Once these are fixed, the values of $\Delta g_f$ on each quadrant must take the form
\begin{equation}\label{maxsum}
\Delta g_f(x,y) = \hbox{(function of $x$)} + \hbox{(function of
  $y$)}. 
\end{equation}
The reason is that we can write the entropy on each quadrant as $\iint
S_0(g_b + \Delta g_f) = \iint S_0(g_b) + S_0'(g_b) \Delta g_f +
(1/2) S_0''(g_b) \Delta g_f^2 + \cdots$. The first term is
independent of $\Delta g_f$ and the second is zero (since $g_b$ was
assumed to equal the average value of $g = g_b + \Delta g_f$ on the
quadrant). Since the changes to the graphon are {\em pointwise} small,
we can ignore terms past the second, so we are basically left with
$S_0''(g_b)/2$ times the squared $L^2$ norm of $\Delta g_f$ on the
quadrant, which we then minimize subject to the constraint that the
marginal distributions are fixed.
We can write $\Delta g_f(x,y) = \phi_1(x) + \phi_2(y) +\phi_3(x,y)$,
where $\phi_1$ and $\phi_2$ give the two fixed marginals, and $\phi_3$
has zero marginals.  But then $\int \Delta g_f^2 = \int \phi_1^2 +
\phi_2^2 + \phi_3^2$, since all of the cross terms integrate to
zero. (Integrating $\phi_2(y)\phi_3(x,y)$ over $x$ or
$\phi_1(x)\phi_3(x,y)$ over $y$ gives zero since $\phi_3$ has zero
marginals, and integrating $\phi_1(x)\phi_2(y)$ over either $x$ or $y$
gives zero since $\phi_1$ and $\phi_2$ have mean zero). The way to
minimize $\int \Delta g_f^2$ is simply to take $\phi_3=0$. {This establishes \eqref{maxsum}}.

Furthermore, to maximize $\T(g)-\T(g_b)$, the functions of $x$ should
be the same (up to scale) in the $I$-$I$ and $I$-$II$ quadrants, and
the same (up to scale) in the $II$-$I$ and $II$-$II$ quadrants. This
is because $\T(g)-\T(g_b) \approx \int_0^1 k(k-1) \E^{k-2} \delta
d(x)^2 dx$ involves a cross term between the contributions to $\delta
d(x)$ from two quadrants, and this cross term is maximized when the
corresponding functions point in the same direction.

The upshot is that there are functions $F_1(x)$ on $[0,c]$ and
$F_2(x)$ on $[c,1]$, each with mean zero and normalized to have
root-mean-squared 1, and constants $\mu$, $\nu$, $\kappa$, $\lambda$,
such that 
\vskip1truein
\begin{eqnarray} \Delta g_f(x,y) & = & \mu F_1(x) + \mu
  F_1(y) \hbox{ on the $I$-$I$ square.} \cr \Delta g_f(x,y) & = & \nu
  F_1(x) + \kappa F_2(y) \hbox{ on the $I$-$II$ rectangle.} \cr \Delta
  g_f(x,y) & = & \lambda F_2(x) + \lambda F_2(y) \hbox{ on the
    $II$-$II$ square.}
\end{eqnarray}

Now we compute the changes in $\T$ and in $s$, to second order in
$(\mu, \nu, \kappa, \lambda)$, noting that all of the first-order
changes are zero, and that the integral of $\Delta g_f^2$ over the
$I$-$I$ square, the two rectangles, and the $II$-$II$ square are $2c^2
\mu^2$, $2c(1-c)(\nu^2+\kappa^2)$, and $2(1-c)^2 \lambda^2$,
respectively.

 \begin{eqnarray} 
 s(g)-s(g_b) & = & \mu^2 c^2 S_0''(p_{11}) 
+ \nu^2 c(1-c) S_0''(p_{12}) \cr
&+& \kappa^2 c(1-c) S_0''(p_{12}) + \lambda^2(1-c)^2 S_0''(p_{22}),\cr
\T(g)-\T(g_b) & = & c k(k-1) d_1^{k-2}(\mu c + \nu(1-c))^2/2\ +\cr 
&+& (1-c)k(k-1)d_2^{k-2}(\kappa c + \lambda(1-c))^2/2.
 \end{eqnarray}
 Both the change in $s$ and the change in $\T$ are the sum of two
 terms, one involving $\mu$ and $\nu$,
 and the other involving $\kappa$ and $\lambda$. Let:
 
\begin{eqnarray} 
A_1 &=&  \mu^2 c^2 S_0''(p_{11}) + \nu^2 c(1-c) S_0''(p_{12}),\cr   
A_2 &=&  \kappa^2 c(1-c) S_0''(p_{12}) + \lambda^2(1-c)^2 S_0''(p_{22}),\cr
B_1 &=& ck(k-1)d_1^{k-2} (\mu c + \nu(1-c))^2/2,\cr 
B_2 &=& (1-c)k(k-1)d_2^{k-2}(\kappa c + \lambda(1-c))^2/2, 
\end{eqnarray}
so to lowest order, 
\be
\frac{S(g)-S(g_b)}{\T_k(g)-\T_k(g_b)} =  \frac{A_1+A_2}{B_1+B_2}.
\ee

For the perturbations involving only $\kappa$ and $\lambda$, the
 ratio $A_2/B_2$ depends only on $r=\kappa/\lambda$:
\begin{equation}
 \frac{A_2}{B_2} = \frac{2 [r^2 cS_0''(p_{12}) 
  + (1-c)S_0''(p_{22})]}{k(k-1)d_2^{k-2}(rc + (1-c))^2/2}.
\end{equation}
We optimize by taking a
derivative w.r.t.~$r$ and setting it equal to zero, with the result
that $r = S_0''(p_{22})/S_0''(p_{12})$, independent of $c$. Since $r$
does not diverge as $c \to 0$, the limit of ${A_2}/{B_2}$ as $c \to 0$ can 
be obtained by simply setting $c=0$, giving a
limiting ratio of $2 S_0''(\E)/[k(k-1)d_2^{k-2}] = \psi_k(\E,\E) < \beta$. 
Since the
limit is less than $\beta$, the ratio must be smaller than $\beta$ for
all sufficiently small values of $c$.  

Almost identical arguments apply to the perturbations involving only $\mu$
and $\nu$. The optimal ratio $\mu/\nu$ is then
$S_0''(p_{12})/S_0''(p_{11})$, which again cannot diverge as $c \to
0$. Thus for small values of $c$ the dominant terms
are those involving $\nu$, and the ratio $A_1/B_1$
approaches $2 S_0''(p_{12})/[k(k-1)d_1^{k-2}]$. 
But $d_1 \approx p_{12} \approx \tilde \E$, so our
ratio goes to $2 S_0''(\tilde \E)/[k(k-1)\tilde \E^{k-2}] 
= \psi_k(\tilde \E, \tilde \E) < \beta$. 

Thus there is a constant $\beta_0<\beta$ such that $A_1 \le \beta_0
B_1$ and $A_2 \le \beta_0 B_2$, so $A_1+A_2 < \beta_0 (B_1 + B_2)$, so
\be 
s(g)-s(g_b) \le \beta_0 (\T(g)-\T(g_b)).  
\ee 

{However $ds/d\T
\approx \beta$ for changes in $c$ that preserve the bipodal structure. This means if we perturb a bipodal graphon
to maximize $s$, it is better to perturb $c$ than to make $(\mu,\nu,\kappa,\lambda)$ nonzero.}
Thus $\kappa$ and $\lambda$ must both be zero,
implying that there is only one Type II cluster, and $\mu$ and $\nu$
must be zero, implying that there is only one Type I cluster.

\paragraph{Step 3.}

We have established that the minimizing graphon is bipodal, with
$p_{22} \approx \E$ and $p_{12} \approx \zeta_k(\E)$ .  We now show
that the form of this graphon is unique.  Since the equation is
bipodal, we consider the exact optimality equations. After eliminating
$\gamma$, we have
\begin{eqnarray} 
  S_0'(p_{11}) & = &  \alpha + k \beta d_1^{k-1}, \cr
  S_0'(p_{12}) & = & \alpha + \frac{k}{2} \beta(d_1^{k-1} + d_2^{k-1}), \cr 
  S_0'(p_{22}) & = & \alpha + k \beta d_2^{k-1}, \cr
  \frac{\partial S}{\partial c} & = & \alpha \frac{\partial \E}{\partial c} + \beta \frac{\partial \T}{\partial c}, \cr 
  \E & = & \E_0, \cr 
  \T & = & \T_0.
\end{eqnarray}
We use the second and third equations to solve 
for $\alpha$ and $\beta$: 
\begin{eqnarray} \label{alphabeta-k}
\alpha & = & \frac{-S_0'(p_{22}) (d_2^{k-1}+d_1^{k-1}) + 2d_2^{k-1}S_0'(p_{12})}
{d_2^{k-1} - d_1^{k-1}}, \cr 
&& \cr 
\beta & = & \frac{2}{k} \frac{S_0'(p_{22}) - S_0'(p_{12})}{d_2^{k-1}-d_1^{k-1}}.
\end{eqnarray}
 Plugging this into the first equation then gives 
 \begin{equation}
 S_0'(p_{11}) - 2 S_0'(p_{12}) + S_0'(p_{22}) = 0. 
\end{equation}
This leaves four equations in four unknowns, which we write as 
\begin{equation}
\vec f = \begin{pmatrix} 0 \cr 0 \cr e_0 \cr t_0
\end{pmatrix},
\end{equation}
where 
\begin{eqnarray}
f_1 & = &  S_0'(p_{11}) - 2 S_0'(p_{12}) + S_0'(p_{22}), \cr 
f_2 & = & \frac{\partial s}{\partial c} - \alpha \frac{\partial \E}{\partial c} - \beta \frac{\partial \T}{\partial c}, \cr 
f_3 & = & c^2 p_{11} + 2 c(1-c) p_{12} + (1-c)^2 p_{22}, \cr 
f_4 & = & c d_1^k + (1-c) d_2^k, 
\end{eqnarray}
and where $\alpha$ and $\beta$ are given by (\ref{alphabeta-k}). 

We know a solution when $\T_0 = \E_0^k$, namely $p_{22}=\E_0$, $p_{12} =
\zeta_k(\E_0)$, $c=0$ and $p_{11} = S_0'{}^{-1}(2S_0'[\zeta_k(\E_0)] -
S_0'(\E_0))$. We will show that $d\vec f$ has non-zero determinant at
this point.  By the inverse function theorem, this implies that, when
$\T_0$ is close to $\E_0^k$, there is only one
value of $(p_{11},p_{12}, p_{22}, c)$ close to this point for which
$f(p_{11},p_{12}, p_{22}, c) = (0,0,\E_0, \T_0)^T$.  Moreover, the
parameters $(p_{11}, p_{12}, p_{22}, c)$ depend analytically on $\E_0$
and $\T_0$. This will complete the proof. (Note that we have reordered the variables by listing $c$ last.) 

The derivatives of $f_1$, $f_3$, and $f_4$ are: 
\begin{eqnarray}
df_1 & = & (S_0''(p_{11}), -2 S_0''(p_{12}), S_0''(p_{22}), 0), \cr 
df_3 & = & (c^2, 2c(1-c), (1-c)^2, 2cp_{11} + 2(1-2c)p_{12} -2(1-c)p_{22}), \cr 
df_4 & = & (k c^2 d_1^{k-1}, kc(1-c) (d_1^{k-1}+d_2^{k-1}), k(1-c)^2 d_2^{k-1}, \cr 
&&  d_1^k -d_2^k + kcd_1^{k-1}(p_{11}-p_{12}) + k(1-c) d_2^{k-1}(p_{12}-p_{22})).
\end{eqnarray}
Evaluating at $c=0$ gives
\begin{eqnarray}
df_1 & = &  (S_0''(p_{11}), -2 S_0''(p_{12}), S_0''(p_{22}), 0), \cr 
df_3 & = & (0,0,1, 2p_{12} -2p_{22}), \cr 
df_4 & = & (0,0, kp_{22}^{k-1}, p_{12}^k-p_{22}^k + kp_{22}^{k-1}(p_{12}-p_{22})). 
\end{eqnarray}
$df$ is block triangular, with $2 \times 2$ blocks. The
lower right block has determinant $p_{12}^k -p_{22}^k -
kp_{22}^{k-1}(p_{12}-p_{22}) = D(p_{22},p_{12})$, which is 
non-zero when $p_{12} \ne p_{22}$, i.e. when $\E_0 \ne (k-1)/k$.

Also  $\partial f_2/\partial p_{11} = 0$ when $c=0$, since $\alpha$
and $\beta$ are independent of $p_{11}$ (when $c=0$) and since $\partial^2
S/\partial c \partial p_{11}$, $\partial^2 \E/\partial c \partial
p_{11}$ and $\partial^2 t/\partial c \partial p_{11}$ are all $O(c)$.
As a result, 
\be \det(df) = S_0''(p_{11}) {\partial f_2}/{\partial p_{12}}
D(p_{22},p_{12}). \ee

So as long as $p_{12} \ne p_{22}$ (i.e. as long as $\E_0 \ne (k-1)/k$),
everything boils down to computing $\partial f_2/\partial p_{12}$ at
$c=0$ and seeing that it is nonzero.  
We compute
\begin{eqnarray} 
\frac{\partial \beta}{\partial p_{12}} & = & \frac{2}{k} 
\frac{(p_{22}^{k-1}-p_{12}^{k-1})(-S_0''(p_{12}))-(S_0'(p_{22})-S_0'(p_{12}))(-(k-1)p_{12}^{k-2})}{(p_{22}^{k-1}-p_{12}^{k-1})^2} \cr 
& = &  \frac{2}{k} \frac{(k-1)p_{12}^{k-2}(S_0'(p_{22})-S_0'(p_{12})) - (p_{22}^{k-1}
-p_{12}^{k-1})S_0''(p_{12})}{(p_{22}^{k-1}-p_{12}^{k-1})^2} 
\end{eqnarray}
at $c = 0$.  We will show separately that this quantity is nonzero. 

Since $\alpha = S_0'(p_{22}) - k \beta d_2^{k-1}$,
\begin{eqnarray} \frac{\partial \alpha}{\partial p_{12}} & = & -k
  d_2^{k-1} \frac{\partial \beta}{\partial p_{12}} -k(k-1)\beta
  d_2^{k-2} \frac{\partial d_2}{\partial p_{12}} \cr 
& = & -k
  d_2^{k-1} \frac{\partial \beta}{\partial p_{12}} - k(k-1) d_2^{k-2}
  c \beta \Rightarrow -k p_{22}^{k-1} \frac{\partial \beta}{\partial
    p_{12}},
\end{eqnarray}
where $\Rightarrow$ denotes a limit as $c \to 0$. 
We also compute
\begin{eqnarray} \frac{\partial^2 S}{\partial c \partial p_{12}} 
  & =  & 2(1-2c) S_0'(p_{12}) \Rightarrow 2S_0'(p_{12}) \cr \cr
  \frac{\partial^2 e}{\partial c \partial p_{12}} & = & 2(1-2c)
  \Rightarrow 2 \cr \cr 
\frac{\partial^2 t}{\partial c \partial
    p_{12}} & = & k(1-2c)(d_1^{k-1}+d_2^{k-1}) \Rightarrow k
  (p_{12}^{k-1} + p_{22}^{k-1})
   \end{eqnarray}
Finally we combine everything:
\begin{eqnarray} \frac{\partial f_2}{\partial p_{12}}\Big |_{c=0} & = &
  \frac{\partial^2 S}{\partial c \partial p_{12}} - \frac{\partial
    \alpha}{\partial p_{12}} \frac{\partial \E}{\partial c} - \alpha
  \frac{\partial^2\E}{\partial c \partial p_{12}} - \frac{\partial
    \beta}{\partial p_{12}} \frac{\partial \T}{\partial c} - \beta
  \frac{\partial^2\T}{\partial c \partial p_{12}} \cr \cr & = &
  2S_0'(p_{12}) - 2 \alpha - \beta k(p_{12}^{k-1} + p_{22}^{k-1}) \cr && +
  \left (kp_{22}^{k-1}(2 p_{12}-2 p_{22}) - (p_{12}^k-p_{22}^k + k
  p_{22}^{k-1}(p_{12}-p_{22})) \right )\frac{\partial \beta}{\partial p_{12}}.
   \end{eqnarray}
The terms not involving $\partial \beta/\partial p_{12}$ all cancel, by the
second variational equation, and we are left with 
\begin{equation}
 \frac{\partial f_2}{\partial p_{12}} = -D(p_{12},p_{22})
\frac{\partial \beta}{\partial p_{12}}.
\end{equation}
 
Finally, we need to show that $\partial \beta/\partial p_{12} \ne 0$.
Since $p_{12}$ maximizes $\psi_k(p_{22},p_{12})$ (for fixed $p_{22}$), we must have 
{(referring to the notation of the proof of Theorem \ref{thm-zeta})}
$(N/D)'=0$, or equivalently $N'/D' = N/D$, where we write $\psi_k =
N/D$, as above.  But $\beta = N'/D'$. If $\partial \beta/\partial
p_{12}$ were equal to zero, then we would have $N''/D'' = N'/D'$. But
we have previously shown that it is impossible to simultaneously have
$N/D = N'/D' = N''/D''$, except at $p_{12} = p_{22} = (k-1)/k$, so
$\partial \beta/\partial p_{12}$ must be nonzero whenever $\E_0 \ne
(k-1)/k$.  This makes $\det(d\vec f)$ nonzero at
$(p_{11},\zeta_k(\E_0), \E_0,0)$, so the solutions near this point are
unique and analytic in $(\E,\T)$.
\end{proof}

\section{Theorem \protect{\ref{Main-Thm-Simple}} for $k$-starlike graphs.}
\label{SEC:k-simple}

Now suppose that $H$ is a $k$-starlike graph with $\ell$ edges, and with 
$n_k$ vertices of degree $k$, and let 
$\T$ be the density of $H$ and $\T_k$ be the density of $k$-stars.  Our first result relates 
$\Delta \T := \T - \E^\ell$ to  $\Delta \T_k := \T_k-\E^k$. 

\begin{lemma}\label{LEM:close} 
If $g$ is an entropy-maximizing graphon for $(\E,\T)$ with
$\T > \E^\ell$, then $\Delta \T  = n_k \E^{\ell-k} \Delta \T_k + 
O(\Delta \T_k^{3/2})$. 
\end{lemma}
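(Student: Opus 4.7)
The plan is to expand $\T_H(g)$ around the constant graphon $g_0 \equiv \E$ and to extract an \emph{exact} algebraic identity that matches a distinguished piece of the expansion with $n_k \E^{\ell-k} \Delta \T_k$. Write $h = g - \E$ and $\eta(x) = d(x) - \E$; both have integral zero since $\E(g) = \E$. Expanding each edge factor $g(x_e) = \E + h(x_e)$ and collecting terms gives
\[
\T_H(g) - \E^\ell \;=\; \sum_{\emptyset \ne S \subseteq E(H)} \E^{\ell - |S|} I_S, \qquad I_S := \int \prod_{e \in S} h(x_e) \prod_{v \in V(H)} dx_v,
\]
and any $S$ containing an edge that shares no vertex with the rest of $S$ contributes $I_S = 0$ via $\int h = 0$.

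First I would isolate the \emph{star-at-a-hub} contributions: for a degree-$k$ vertex $v$ and a $j$-subset of its $k$ incident edges, the corresponding $S$ is a star, and integrating out its $j$ leaves gives $I_S = \int \eta(x_v)^j\, dx_v = \int \eta^j$. Summing over all degree-$k$ vertices $v$ and all $1 \le j \le k$ (the $j=1$ term vanishing from $\int \eta = 0$),
\[
\sum_{v : \deg(v) = k} \sum_{j=1}^k \binom{k}{j} \E^{\ell - j} \int \eta^j \;=\; n_k \E^{\ell - k} \left( \int d^k - \E^k \right) \;=\; n_k\, \E^{\ell - k}\, \Delta \T_k,
\]
which is an exact identity, not merely a leading-order match.

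Second I would bound all remaining terms by $O(\Delta \T_k^{3/2})$. Two key inputs are: (i) strict convexity of $t \mapsto t^k$ on $[0,1]$ gives $t^k - \E^k - k\E^{k-1}(t-\E) \ge c_0 (t - \E)^2$, hence $\|\eta\|_2^2 \le C\, \Delta \T_k$; and (ii) the entropy maximizer is multipodal by \cite{KRRS}, so $T_h$ is a finite-rank operator with spectrum controlled by the cluster data. The remaining $S$ split into (a) disconnected $S$ with two or more non-trivial components, and (b) connected $S$ that is not a pure star at a hub. In case (a), $I_S$ factors across components, each factor is $O(\|\eta\|_2^2) = O(\Delta \T_k)$, so $|I_S| = O(\Delta \T_k^2) \subseteq O(\Delta \T_k^{3/2})$.

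The main obstacle is case (b): connected non-star subgraphs such as paths of length $\ge 3$, cycles, or triangles (the last already appearing for $H = K_{k+1}$). I would first integrate out the leaves of $S$ to reduce $I_S$ to a core integral over the non-leaf vertices, of the shape $\langle \eta, T_h \eta \rangle$ for a three-edge path and $\operatorname{tr}(T_h^3)$ for a triangle, and analogously for larger $S$. A direct bipodal calculation shows such core integrals are $O(c^2) = O(\Delta \T_k^2)$ thanks to near-antisymmetric cancellations in the spectrum of $T_h$; the remaining work is to combine the Euler--Lagrange equations for the $H$-constrained problem with Hilbert--Schmidt and operator-norm estimates on $T_h$ to see that the same $O(\Delta \T_k^{3/2})$ bound holds for a general multipodal entropy maximizer, thereby completing the proof.
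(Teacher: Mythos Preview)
Your expansion and the exact identification of the star-at-a-hub terms with $n_k\E^{\ell-k}\Delta\T_k$ are correct and match the paper. The gap is in how you bound the connected non-star terms in case (b).

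You invoke multipodality from \cite{KRRS} to make $T_h$ finite-rank and then appeal to a bipodal calculation with ``near-antisymmetric cancellations'' giving $O(c^2)$. But the multipodality result in \cite{KRRS} is proved only for star models (edges and $k$-stars), not for a general $k$-starlike $H$; indeed the paper explicitly notes later in this section that ``we are no longer in the setting where the entropy maximizer is proven to be multipodal.'' So the finite-rank/bipodal route is unavailable, and the $O(c^2)$ claim is both unjustified and unnecessary. Separately, your only quantitative input is $\|\eta\|_2^2 \le C\,\Delta\T_k$, which controls the \emph{marginal} $\eta$ but not $\|h\|_2 = \|\Delta g\|_2$; the core integrals you write down (e.g.\ $\operatorname{tr}(T_h^3)$) involve $h$, not $\eta$.

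The paper's argument closes this gap in two strokes, neither needing any podal structure. First, any connected $S$ whose edges do not all share a vertex contains a triangle or a $3$-chain, and taking absolute values and viewing $|h|$ as an integral operator $L$ gives
\[
\bigl|\operatorname{tr}(L^3)\bigr| \;\le\; \operatorname{tr}(L^2)^{3/2} \;=\; \|\Delta g\|_2^{3},
\qquad
\langle 1|\,L^3\,|1\rangle \;\le\; \|\Delta g\|_2^{3},
\]
so every such term is $O(\|\Delta g\|^3)$. Second, the hypothesis that $g$ is an \emph{entropy maximizer} is what ties $\|\Delta g\|$ to $\Delta\T$: since $S_0$ is uniformly strictly concave one has $|\Delta s| \ge c\|\Delta g\|_2^2$, while a bipodal trial graphon shows $|\Delta s| \le C\,\Delta\T$ at the optimum, hence $\|\Delta g\|_2^2 = O(\Delta\T)$, and then $\Delta\T \sim \Delta\T_k$ follows from the identity you already proved. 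That entropy-comparison step is the only place the optimizer hypothesis enters, and it is what your proposal is missing.
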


\begin{proof} 
Writing $g(x,y) = \E + \Delta g(x,y)$, we
expand $\T$ as a polynomial in $\Delta g$:
\be
\T = \int d{\bf x} \prod g(x_i, x_j)= \int d{\bf x} \prod (\E
+ \Delta g(x_i, x_j)), 
\ee
where there is a variable $x_i$ for each vertex of $H$ and the product is
over all edges in $H$. 
 
The 0-th order term is $\E^\ell$. 
The first-order term is identically zero, since 
$\iint \Delta g(x,y) dx\, dy = \Delta \E = 0$. When looking at
higher-order expansions, there are some terms that come from having
{all} $\Delta g$'s along edges that share a single vertex of degree $k$. {These terms also appear in the expansion of $\T_k$,} so the sum of those terms is exactly
$\E^{\ell -k} \Delta \T_k$. Since all vertices have degree $k$ or 1, 
summing these terms gives $n_k \E^{\ell-k} \Delta \T_k$. 

What remains are terms where the $\Delta g$'s refer to edges that do not
all share a vertex.
We bound these in turn. In each case, let $\{ e_i \}$
be the set of edges that correspond to factors of $\Delta g$. 
\begin{itemize}
\item If one of the $e_i$'s is disconnected from the rest, then the integral
is exactly zero. So we can assume that all connected components of 
$\{ e_i \}$ contain at least two edges. 

\item If there is more than one connected component, then we get a product of 
factors, one for each connected component. Each factor is bounded by a constant
times $\| \Delta g \|^2$, so the product is $O(\| \Delta g \|^4)$. 

\item If there is only one connected component, whose edges do not all share
a vertex, then $\{ e_i \}$ either contains a triangle or a chain of three
consecutive edges. We bound such terms by taking absolute values of all
the $\Delta g$'s and setting all terms other than the three edges in the 
triangle or 3-chain to 1. The result is either a constant times 
$\iiiint |\Delta g(w,x)| |\Delta g(x,y)| |\Delta g(y,z)| dw\, dx \, dy \, dz$,
or by a constant times $\iiint |\Delta g(x,y)| |\Delta g(y,z)| |\Delta g(z,x)|
dx \, dy \, dz$, either of 
which in turn is bounded by a constant times $\| \Delta g \|^3$. 
(If we then think of $|\Delta g|$
as the integral kernel of an operator $L$ on $L^2(0,1)$, then the integral for
a 3-chain is the expectation of $L^3$ in a particular state, and the integral
for a triangle is the trace of $L^3$. Both are bounded by $Tr(L^2)^{3/2}=
\| |\Delta g| \|^3 = \| \Delta g \|^3$. )
\end{itemize}
Since $\Delta \T_k$ scales as $\| \Delta g \|^2$, all the corrections to 
the approximation $\Delta \T \approx n_k \E^{\ell-k} \Delta \T_k$ are 
$O(\Delta \T_k^{3/2})$ or smaller. 
\end{proof}

\subsection{Proof of Theorem \protect{\ref{Main-Thm-Simple}}}

Since $\Delta \T$ is proportional to $\Delta \T_k$ (plus small errors), 
the problem of optimizing $\Delta s/\Delta \T$ is a small perturbation of 
the problem of optimizing $\Delta s/ \Delta \T_k$, or equivalently 
optimizing $\Delta s$ for fixed $\Delta \T_k$, which we solved in the last
section. Since that problem has a unique optimizer, any optimizer for
$\Delta s/\Delta \T$ must come close to optimizing $\Delta s/\Delta \T_k$,
and so must be close to the bipodal graphon derived in Theorem
\ref{thm1}.

We can thus write $g =
g_b + \Delta g_f$, as in the last steps of the proof of Theorem
\ref{thm1}, where $g_b = \E + \Delta g_b$ 
is a bipodal graphon with $p_{22} \approx \E$ and 
$p_{12} \approx \zeta_k(\E)$ and where 
$\Delta g_f$ is a function that averages to zero on each quadrant of $g_b$.

\begin{lemma} The function $\Delta g_f$ is pointwise small. That is, as
$\T \to \E^\ell$, $\Delta g_f$ goes to zero in sup-norm. 
\end{lemma}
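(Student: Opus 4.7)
The plan is to exploit the Euler--Lagrange equation for the entropy maximizer to upgrade the $L^2$-smallness of $\Delta g_f$ (already visible from the bulk estimates in Sections~\ref{SEC:k-stars}--\ref{SEC:k-simple}) to sup-norm smallness. Under the $H$-density constraint the EL equation takes the form $S_0'(g(x,y)) = \alpha + \beta\, D_H(x,y;g)$, where $D_H(x,y;g) := \delta \T_H/\delta g(x,y)$ is a sum over edges $e=(u,v)$ of $H$ of integrals in which $x_u=x$ and $x_v=y$ are held fixed while the remaining $|V(H)|-2$ vertex positions are integrated against $\ell-1$ factors of $g$. Since $(S_0')^{-1}$ is Lipschitz on compact subintervals of $(0,1)$ and the Lagrange multipliers $\alpha,\beta$ stay bounded as $\T \searrow \E^\ell$, it suffices to prove $\|D_H(\cdot,\cdot;g) - D_H(\cdot,\cdot;g_b)\|_\infty \to 0$.

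Writing $g = g_b + \Delta g_f$ and expanding the multilinear functional $D_H$, each term in the difference $D_H(x,y;g) - D_H(x,y;g_b)$ carries at least one factor of $\Delta g_f$ inside an integral. I would sort these into two kinds. The \emph{bulk} terms are those in which the $\Delta g_f$-edge has both endpoints integrated out; Cauchy--Schwarz bounds such a term in sup-norm by a constant times $\|\Delta g_f\|_1 \le \|\Delta g_f\|_2$, which is $o(1)$ and uniform in $(x,y)$. The \emph{boundary} terms are those in which the $\Delta g_f$-edge has one endpoint pinned at $x$ or $y$, producing integrals of the shape $\int \Delta g_f(x,z)\,K(z)\,dz$ with $K$ bounded; here bare $L^2$-smallness of $\Delta g_f$ does not preclude the rowwise integral $\int |\Delta g_f(x,z)|\,dz$ from being large for some exceptional $x$, and controlling this uniformly is the main obstacle.

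To dispatch the boundary terms I would bootstrap through the EL equation. The identity $g(x,\cdot) = (S_0')^{-1}(\alpha + \beta D_H(x,\cdot;g))$ expresses each row of $g$, hence of $\Delta g_f$, as a Lipschitz image of a functional that is itself built from averages of $g$; moreover, by \cite{KRRS} the optimizing $g$ is multipodal with bounded podality, so $\Delta g_f$ is piecewise constant on a finite refinement of the bipodal partition underlying $g_b$. A localized rowwise spike in $\Delta g_f$ would therefore have to live on one of these finitely many refinement pieces, and its presence would feed back through the EL equation to produce a matching localized feature in $D_H$; but the bulk estimate and the multipodal structure already forbid such features. Pushing this loop through promotes the $L^2$-control on $\Delta g_f$ to a uniform bound on $\int |\Delta g_f(x,z)|\,dz$ in $x$, whence $\|D_H(\cdot,\cdot;g)-D_H(\cdot,\cdot;g_b)\|_\infty \to 0$.

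From here the lemma follows quickly. Define $\tilde g_b(x,y) := (S_0')^{-1}(\alpha + \beta D_H(x,y;g_b))$; because $g_b$ is bipodal the function $D_H(\cdot,\cdot;g_b)$ takes only finitely many values (one per quadrant), so $\tilde g_b$ is itself a bipodal graphon, and the Lipschitz property of $(S_0')^{-1}$ yields $\|g - \tilde g_b\|_\infty \to 0$. Since $g_b$ is by construction the quadrant-average of $g$ while $\tilde g_b$ is constant on those same quadrants, averaging gives $|g_b(x,y) - \tilde g_b(x,y)| \le \|g - \tilde g_b\|_\infty$ pointwise, and the triangle inequality delivers $\|\Delta g_f\|_\infty = \|g - g_b\|_\infty \to 0$, as claimed.
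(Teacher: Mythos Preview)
Your overall scheme --- apply the pointwise Euler--Lagrange equation, argue that the variational derivative $D_H(x,y;g)$ is nearly constant on quadrants, then pull this back through $(S_0')^{-1}$ --- matches the paper's. The gap is in how you control the ``boundary'' terms. You invoke \cite{KRRS} to assert that the optimizer $g$ is multipodal with bounded podality, and then lean on that finite structure to rule out rowwise spikes in $\Delta g_f$. But multipodality in \cite{KRRS} is proved only for $k$-star models (and, as used later in this paper, for positive linear combinations of $k$-stars); it is \emph{not} known for general $k$-starlike $H$. The paper is explicit about this: ``Since we are no longer in the setting where the entropy maximizer is proven to be multipodal, we cannot use the equations (\ref{KKT1}) directly.'' Without multipodality your boundary argument collapses to a heuristic (``a spike would feed back through the EL equation\ldots''), which is not a proof.

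What the paper does instead is a direct quadrant-by-quadrant bootstrap that never needs multipodality. From $\|\Delta g\|_2 = o(1)$ one extracts a set $U$ of measure $a=o(1)$ outside of which the row integrals $\int |\Delta g(x,y)|\,dy$ are small. One then estimates $\delta\T/\delta g(x,y)$ by restricting the $(v-2)$-fold vertex integral to $(U^c)^{v-2}$: on $U^c\times U^c$ this forces $\delta\T/\delta g$ (hence $g$) to be nearly constant; on $U\times U^c$ it yields the explicit approximation $\delta\T/\delta g \approx \tfrac{kn_k\E^{\ell-k}}{2}(d(x)^{k-1}+d(y)^{k-1})$, after which the integrated equations (\ref{ELintegrated}) pin $d(x)$ to two values; finally, near-constancy on the $I$--$II$ block feeds back into the $U\times U$ estimate. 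The key maneuver you are missing is this use of the small exceptional set $U$ to uniformly control the vertex integrals without any a priori podality.
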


\begin{proof}[Proof of lemma]

Since we no longer in the setting where the entropy maximizer is proven to
be multipodal, we cannot use the equations (\ref{KKT1}) directly. 
However, we can still apply the method of 
Lagrange multipliers to pointwise variations of the graphon. (See
\cite{KRRS} for a rigorous justification.) 
These variational equations are 
\be \label{EL-k-simple} \frac{1}{2} \ln \left ( \frac{1}{g(x,y)} - 1 \right ) = 
\frac{\delta s}{\delta g(x,y)} = \alpha + 
\beta \frac{\delta \T}{\delta g(x,y)}.
\ee
We need to compute $\delta \T/\delta g$ and show that it is nearly
constant on each quadrant. Since $\alpha$ and $\beta$ are constants, this 
would imply that $g(x,y)$ is nearly constant on each quadrant, and hence that
$\Delta g_f$ is pointwise small. Let $g_0(x,y)=\E$. 

Since $\| \Delta g \|$
is small (where $\Delta g =g-g_0 = 
\Delta g_b + \Delta g_f$),
we can find a small constant $a=o(1)$
such that, for all $x$ outside a set $U\subset[0,1]$ of measure $a$, $\int_0^1
|\Delta g(x,y)| dy < a$.  (This set $U$ is essentially what we
previously called the Type I clusters, but at this stage of the
argument we are not assuming a multipodal structure. Rather, we are 
just using the fact that $\T - e^\ell = O(\| \Delta g\|^2)$.)

The functional derivative $\delta \T/\delta g$ has a diagrammatic 
expansion similar to the expansion of $\T$. 
For each edge of $H$, we get a contribution by deleting the
edge, assigning the values $x$ and $y$ to the endpoints of the edge,
and integrating over the values of all other vertices.  Since $U$ is
small, we can estimate $\delta \T/\delta g$ to within $o(1)$ by
restricting the integral to $(U^c)^{v-2}$, where $v$ is the
number of vertices in $H$ and $U^c$ is the complement of $U$. 
This implies that terms involving $\Delta
g$ can only contribute non-negligibly on edges connected to $x$ or to
$y$. Furthermore, they can only contribute when attached to $x$ if $x
\in U$, and can only contribute when attached to $y$ if $y \in U$.

We now begin a bootstrap. We will show that $\delta \T/\delta g$ is nearly
constant on each quadrant {$U^c\times U^c,U\times U^c, U\times U$} in turn. This will show that $g$ is nearly constant
on that quadrant, which will help us prove that $\delta \T/\delta g$ is
nearly constant on the next quadrant. 

If $x$ and $y$ are both in $U^c$, then 
the contributions of the terms involving $\Delta g$ are negligible, so
$\delta \T/\delta g(x,y)$ can be computed, to within a small error,
using the approximation $g(x,y) \approx \E$. But when $g(x,y) = \E$, 
$\delta \T/\delta g(x,y)$ is independent of $x$ and $y$. Since 
$\delta \T/\delta g(x,y)$ is nearly constant on $U^c \times U^c$, 
equation (\ref{EL-k-simple}) implies that $g$ is nearly constant
on $U^c \times U^c$. In particular, $\Delta g_f$ is pointwise small on
$U^c \times U^c$.

Next suppose that $y \in U^c$ and $x \in U$. 
Then $\delta \T/\delta g(x,y)$ is nearly
independent of $y$, so $g(x,y)$ is nearly independent of $y$, and is
nearly equal to $d(x)$. But then the integrals involved in computing
$\delta \T/\delta g(x,y)$ are easy, where we use $g_0 + \Delta g$ on
the edges connected to $x$, $g_0$ on all other edges, and only
integrate over $(U^c)^{v-2}$. If the degree of $x$ is $k$, then the edges
connected to $x$ contribute 
$d(x)^{k-1} e^{\ell-k}$. Summing over edges, and symmetrizing over the assignment
of $x$ and $y$ to the two endpoints, we obtain the approximation
\be \label{grad-t}
\frac{\delta \T}{\delta g(x,y)} \approx \frac{k n_k \E^{\ell-k}}{2} 
\left (d(x)^{k-1} + d(y)^{k-1} \right ). 
\ee
Up to an overall factor of $n_k \E^{\ell-k}$, this is the same
functional derivative as for a $k$-star.  This also
applies if $x \in U^c$, except that in the
latter case $d(x) \approx \E$, and also applies if $x \in U^c$ and $y \in U$.

In other words, we can use the approximation (\ref{grad-t}) in 
(\ref{EL-k-simple}) whenever {\em either} $x$ or $y$ (or both) is in $U^c$.  
This implies that the integrated equations (\ref{ELintegrated}) 
apply for all $x$ (with $d_i$ replaced by $d(x)$, and with $\beta$ scaled up
by $n_k \E^{\ell-k}$). Following  the exact same reasoning as
in the proof of Theorem \ref{thm1}, we obtain 
that $d(x)$ only takes on 2 possible values
(up to $o(1)$ errors). We then define Type I and Type II points,
depending on whether the degree function is close to $\zeta_k(\E)$ or
$\E$, respectively, so that $U$ is precisely the set of Type I
points. Our graphon is then nearly constant on the $I$-$II$, $II-I$
and $II$-$II$ quadrants.

We still need to show that the graphon is nearly constant in the
$I$-$I$ quadrant. Suppose that $x$ and $y$ are in $U$.  In computing
$\delta \T/\delta g(x,y)$, we approximate our integral by integrating
over $(U^c)^{v-{2}}$. But if $z \in U^c$, then $g(x,z)$ is (nearly)
independent of $x$, since we have just established that $g$ is nearly
constant on the $I$-$II$ quadrant.  Thus $\delta \T/\delta g$ (which is 
obtained by integrating products of terms $g(x,z)$) is nearly
independent of $x$. Likewise, it is nearly independent of $y$, implying
that $g(x,y)$ is nearly constant on the $I$-$I$ quadrant.

Note, by the way, that the approximation (\ref{grad-t}) does not apply in the
$I$-$I$ quadrant; in that case $\delta \T/\delta g$ contains
terms with powers of both $d(x)$ and $d(y)$. However, that approximation
is not needed for our proof, since the $I$-$I$ quadrant only
contributes $O(c)$ to the integrated equations (\ref{ELintegrated}).
\end{proof}

Returning to the proof of Theorem \ref{Main-Thm-Simple}, 
we need to compare $s(g_b + \Delta g_f) - s(g_b)$ to $\T(g_b+\Delta
g_f)-\T(g_b)$.  

As before, we expand $\T(g)$ as the integral of a polynomial in $g$,
obtained by assigning $g_0 + \Delta g_b + \Delta g_f$ to each edge of
$H$ and integrating. The difference between $\T(g_b + \Delta g_f)$ and
$\T(g_b)$ consists of terms with at least one $\Delta g_f$.  However,
the terms with {\em exactly} one $\Delta g_f$ are identically zero,
since $g_b$ is constant on quadrants, and $\Delta g_f$ averages to
zero on each quadrant. Furthermore, terms for which all of the $\Delta
g_b$'s and $\Delta g_f$'s share a vertex are exactly what we would get
from the approximation $\Delta \T \approx n_k \E^{\ell-k}\T_k$.  Any
term  that distinguishes between $\Delta \T$ and $n_k \E^{\ell-k}
\Delta \T_k$ must have at least two $\Delta g_f$'s and either a third
$\Delta g_f$ or a $\Delta g_b$, forming either a 3-chain, a triangle,
or two connected $\Delta g_f$'s and a disconnected $\Delta g_b$.

Let $\Delta g_f'(x,y) = |\Delta g_f(x,y)|$, and let 
\be
\Delta g_b'(x,y)  = \begin{cases} 2c & x,y \in II, \cr 1 & \hbox{otherwise.}
\end{cases}\ee
 This is conveniently expressed in terms of outer products. 
Let $| 1 \rangle \in L^2([0,1])$ be the constant function 1, and let 
$|\omega \rangle$ be the function 
\be
\omega(x) = \begin{cases} 0 & x<c, \cr 
1 & x>c. \end{cases}\ee

 Then 
\begin{eqnarray}
  \Delta g_b' & = & | 1 \rangle \langle 1 | - |\omega \rangle \langle \omega | + 2c |\omega \rangle \langle \omega | \cr
  & = & |1 \rangle \langle 1-\omega| + |1 - \omega \rangle \langle \omega| + 2c |\omega \rangle \langle \omega|.
\end{eqnarray}

Note that $|\Delta g_b(x,y)| \le \Delta g_b'(x,y)$ for all $x,y \in (0,1)$. 
{To see this, the only issue is what happens when
$(x,y)$ is in the $II-II$ quadrant, since otherwise we trivially have 
$|\Delta g_b| \le 1$. Since $e(g)$ is fixed, $(1-c)^2$ times
$\Delta g_b(x,y)$ for $x,y > c$ equals minus the integral of $\Delta g_b$ 
over the other three quadrants. But the area of those three quadrants is 
$2c-c^2 < 2c$, and the biggest possible value of 
$|\Delta g_b|$ is $\max(e,1-e)<1$,
so $\frac{1}{(1-c)^2} \int |\Delta g_b|$ (integrated over the $I-I$, $I-II$
and $II-I$ quadrants) is strictly less than $2c+O(c^2)$, and so is bounded 
by $2c$ for small $c$ (note that $O(c^2)$ errors are negligible).}

We obtain upper bounds on the contributions of the relevant terms 
in the expansion of $\T$ by replacing three
$\Delta g_f(x,y)$'s and $\Delta g_b(x,y)$'s with $\Delta g_f'(x,y)$ 
and $\Delta g_b'(x,y)$, respectively, and replacing all other
terms with $1$. 

Since all graphons are symmetric, hence Hermitian, their operator
norms are bounded by their $L^2$ norms, so for any 3-chain
\be
 \langle 1 |\Delta g_1' \Delta g_2' \Delta g_3' |1 \rangle \le \|\Delta g_1' \| \| \Delta g_2' \| \|\Delta g_3'\|.\ee
Since $\| \Delta g_b' \|$ and $\| \Delta g_f'\|$ are both $o(1)$ (more
precisely, $O(\sqrt{\T-\E^\ell}))$, the contribution of any 3-chain is
bounded by an $o(1)$ constant times $\| \Delta g_f \|^2$.

As for triangles, $Tr(\Delta g_f'^3) \le \| \Delta g_f' \|^{3} = \|
\Delta g_f \|^3$. Finally, we must estimate the trace of $ \Delta g_f'
\Delta g_f' \Delta g_b'$. But this trace is
\be \langle 1 - \omega | \Delta g_f'  \Delta g_f' |1\rangle + \langle \omega | \Delta g_f' \Delta g_f' |1-\omega \rangle
+ 2c \langle \omega |  \Delta g_f'  \Delta g_f' | \omega \rangle.\ee
Since $\| 1 - \omega\| =\sqrt{c}$, the total is bounded by $(2\sqrt{c} + 2c^2) \| \Delta g_f\|^2$. 

The upshot is that the ratio of $s(g_b + \Delta g_f) - s(g_b)$ and
$\T(g_b+\Delta g_f)-\T(g_b)$ is the same as that computed for $k$-stars
(up to an overall factor of $n_k \E^{\ell-k}$), 
plus an $o(1)$ correction. But that ratio
was bounded by a constant $\beta_0 < \beta$. Restricting attention to values of
$\T$ for which the correction is smaller than $(\beta-\beta_0)/2$, we
still obtain the result that having a non-zero $\Delta g_f$ is a less
efficient way of generating additional $\T$ than simply changing
$c$. Thus the optimizing graphon is exactly bipodal.

Once bipodality is established, 
uniquenesss follows exactly as in the proof of Theorem \ref{thm1}. 
The difference between $\Delta \T$ and $n_k \E^{\ell-k} \Delta \T_k$ is 
of order $c^{3/2}$, and so does not affect the linearization of the optimality
equations at $c=0$. 

\section{Linear combinations of $k$-stars}
\label{SEC:linear-com}

We proved Theorem \ref{Main-Thm-Simple} by first showing that $k$-star 
models have the desired behavior, and then showing that, for an arbitrary
$k$-starlike graph $H$, $\Delta \T$ is well-approximated by a multiple
of $\Delta \T_k$, so the model with densities of edges and $H$ behaves 
essentially the same as a model with densities of edges and $k$-stars. 

To prove Theorem \ref{Main-Thm-Complex}, we consider in this section
a family of models
in which we can prove bipodality and uniqueness of entropy maximizers 
directly, as we did for $k$-stars. In the next section, we will show
how to approximate a model with an arbitrary $H$ with a model in this family.  

Let $h(x) = \sum_{k\ge 1} a_k x^k$ be a polynomial with non-negative coefficients and degree $\ge 2$. 
Let $\T = \sum a_k \T_k$, and consider graphs with fixed edge density
$\E$ and fixed $\T$. In \cite{KRRS} it was proved that  the entropy-maximizing
graphons in such models are always multipodal. 

Most of the analysis of $k$-star models carries over to positive linear
combinations, and so will only be sketched briefly. We will provide complete
details where the arguments differ.

In analogy to our earlier development, let $\psi(\E, \tilde \E) = N/D$, 
where 
\begin{eqnarray} N(\E, \tilde \E) &=& 
2[S_0(\tilde \E)-S_0(\E) - (\tilde \E-\E) S_0'(\E)], \cr 
D(\E, \tilde \E) &=& 
h(\tilde \E)-h(\E) - (\tilde \E-\E) h'(\E).
\end{eqnarray}
Since $h''(x)$ is positive for $x>0$, $D$ is only zero when $\tilde \E=\E$, 
and we fill in that removable singularity in $\psi$ by defining
$\psi(\E,\E) = 2 S_0''(\E)/h''(\E)$.

\begin{theorem} \label{Thm-Linear-Combination}
For all but finitely many values of $\E$, there is a $\T_0 > h(\E)$ such that,
for $\T \in (h(\E), \T_0)$, the entropy-optimizing graphon is bipodal and 
unique, with data varying analytically with $\E$ and $\T$. As $\T$ approaches
$h(\E)$ from above, $p_{22} \to \E$, $p_{12}$ approaches a point $\tilde \E$
where $\psi'(\E,\tilde \E)=0$, $p_{11}$ satisfies $S_0'(p_{11})=2S_0'(p_{12})
- S_0'(p_{22})$ and $c \to 0$ as $O(\Delta \T)$. 
\end{theorem}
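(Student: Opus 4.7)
The plan is to mirror the three-step proof of Theorem \ref{thm1}, systematically replacing $x^k$ by $h$: the role of $k d^{k-1}$ in the Euler-Lagrange equations is taken by $h'(d)$, the role of $k(k-1)d^{k-2}$ by $h''(d)$, and the role of $\psi_k$ by the $\psi = N/D$ defined above. The first task is to analyze $\psi$ and pin down a finite exceptional set $B_h \subset (0,1)$. Since $h'' > 0$ on $(0,1)$, $D$ vanishes only on the diagonal, so $\psi$ extends real-analytically across its removable singularity; for each fixed $\E$, $\psi(\E,\cdot)$ attains its supremum on $(0,1)$ at an interior point $\zeta(\E)$. I would put into $B_h$ every $\E$ for which any of the following analytic degeneracies occurs: (i) two distinct points of $(0,1)$ are global maximizers of $\psi(\E,\cdot)$; (ii) $\psi''(\E,\zeta(\E))=0$; (iii) $\zeta(\E) = \E$; (iv) $N/D = N'/D' = N''/D''$ holds at $(\E,\zeta(\E))$; (v) $\psi(\zeta(\E),\zeta(\E)) = \psi(\E,\zeta(\E))$. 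Each condition is analytic in $\E$, so each defines either a finite set or all of $(0,1)$; ruling out the latter is the main subtlety, addressed at the end.

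Step 1 (approximate bipodality) carries over verbatim. By \cite{KRRS} the maximizer is multipodal, and the method of Lagrange multipliers, combined with the Type I/Type II split as in the proof of Theorem \ref{thm1}, yields to within $o(1)$
\begin{align}
2\bigl(S_0'(d_i)-S_0'(\E)\bigr) &= \beta\bigl(h'(d_i)-h'(\E)\bigr),\\
2\bigl[S_0(d_i)-S_0(\E)-S_0'(\E)(d_i-\E)\bigr] &= \beta\bigl[h(d_i)-h(\E)-h'(\E)(d_i-\E)\bigr].
\end{align}
For $d_i\ne\E$ the second equation gives $\beta = \psi(\E,d_i)$; dividing by the first forces $\psi'(\E,d_i)=0$; and taking the largest admissible $\beta$ selects $d_i=\zeta(\E)$.

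Step 2 (exact bipodality) proceeds unchanged: writing $g = g_b + \Delta g_f$ and optimizing $(s(g)-s(g_b))/(\T(g)-\T(g_b))$ quadrant by quadrant, the limiting ratios $A_1/B_1$ and $A_2/B_2$ as $c\to 0$ become $\psi(\zeta(\E),\zeta(\E))$ and $\psi(\E,\E)$; both are strictly less than $\beta$ by (v) and by the exclusion of (i) and (iii) respectively. Step 3 (uniqueness and analyticity) is the same implicit-function-theorem calculation applied to the four-equation system in $(p_{11},p_{12},p_{22},c)$, with $\alpha$ and $\beta$ eliminated via
\[ \beta = \frac{2\bigl(S_0'(p_{22})-S_0'(p_{12})\bigr)}{h'(d_2)-h'(d_1)}, \qquad \alpha = S_0'(p_{22}) - \beta\, h'(d_2). \]
Evaluated at $c=0$ the Jacobian is block triangular, with lower-right determinant $D(p_{22},p_{12})$ nonzero by strict convexity of $h$, and upper-left determinant a nonzero multiple of $\partial\beta/\partial p_{12}$; the latter is nonvanishing precisely when (iv) fails.

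The main obstacle is proving that $B_h$ is finite. For $k$-stars this was automatic because $\psi_k(\E,\cdot)$ has a unique critical point for every $\E$, but for a general positive combination $h$ multiple critical points can coexist and exchange dominance as $\E$ varies. The most delicate case is (i), the coincidence of two global maxima: it defines the zero set in $\E$ of an analytic resultant, and finiteness requires that this resultant not vanish identically. I would attack this by specializing $\E$ to a regime where $h$ is dominated by a single monomial (so that the extremal analysis of Theorem \ref{thm-zeta} applies) to seed a nondegeneracy at a single $\E$; analytic continuation then propagates nondegeneracy to all but finitely many $\E$. Conditions (ii), (iv) and (v) can be handled in the same spirit.
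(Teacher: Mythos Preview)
Your proposal follows essentially the same three-step argument as the paper, with the same substitutions $d^{k-1}\to h'(d)$, $\psi_k\to\psi=N/D$, and the same implicit-function computation for uniqueness. Two remarks on where you diverge.

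First, your condition (v) is redundant. At a critical point of $\psi(\E,\cdot)$ one has $(N/D)''=(N''/D''-N/D)\,D''/D$, and since $D,D''>0$, strictness of the maximum (i.e.\ excluding your (ii)) already gives $N''/D''<N/D=\beta$. But $N''/D''=2S_0''(\tilde\E)/h''(\tilde\E)=\psi(\tilde\E,\tilde\E)$, so $\psi(\zeta(\E),\zeta(\E))<\beta$ comes for free. The inequality $\psi(\E,\E)<\beta$ likewise follows from (i) and (iii).

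Second, the paper handles the finiteness of the $\partial\beta/\partial p_{12}=0$ locus more directly than your monomial-specialization idea. Since $\beta=N'/D'$, vanishing of $\partial\beta/\partial p_{12}$ at the maximizer means $N'/D'=N''/D''$; combined with $N/D=N'/D'$ this gives $\psi''=0$ at a maximum, which forces $\psi'''=0$ as well, hence $N''/D''=N'''/D'''$. Now $N'',N''',D'',D'''$ depend on $\tilde\E$ alone, and $D''N'''=D'''N''$ clears denominators to a polynomial in $\tilde\E$ with finitely many roots. Each such $\tilde\E$ then determines $\E$ through the equation $\psi'=0$, which (by the same algebra as in Step~3 of Theorem~\ref{thm-zeta}) is symmetric in $\E$ and $\tilde\E$. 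This gives finiteness outright, whereas your analyticity-plus-specialization argument, as stated, does not rule out accumulation of bad $\E$ at the endpoints $0$ or $1$.
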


\begin{proof}

For a multipodal graphon, $\T(g) = \sum c_i h(d_i)$. After eliminating
$\gamma$, the optimality equations become
\begin{align}
S_0'(p_{ij}) &= \alpha + \beta (h'(d_i) + h'(d_j))/2,\\
2\sum_{j=1} c_j (S_0(p_{ij})\!-\!S_0(p_{Mj})) &= 2\alpha (d_i\!-\!d_M) + \beta 
\big [h(d_i)\!-\!h(d_M) +
\sum_{j=1}^M c_j 
h'(d_j) (p_{ij} \!-\! p_{Mj}) \big]. 
\end{align}
As before, we distinguish between Type I clusters that are small and 
Type II clusters that have $d_i \approx \E$. Summing the optimality equations over
$j$ of Type II, and approximating $d_j$ by $\E$, we obtain the equations
\begin{eqnarray}
S_0'(d_i) & = & \alpha + \beta \left ( h'(d_i) + h'(\E) \right)/2, \\
S_0(d_i)-S_0(\E) & = & \alpha(d_i-\E) + \beta \left [
h(d_i) - h(\E) + h'(\E)(d_i-\E) \right ],
\end{eqnarray}
that are accurate to within $o(1)$. We use the first equation, with $i=M$ {(a type II cluster)}, to
solve for $\alpha$, and plug it into the equations for $i<M$ to get
\begin{eqnarray}
  2 (S_0'(d_i)-S_0'(\E)) & = & \beta (h'(d_i) - h'(\E)), \\
  2 [S_0(d_i) -S_0(\E) - S_0'(\E)(d_i-\E)] & = & 
\beta [h(d_i)-h(\E) - h'(\E)(d_i - \E)],
\end{eqnarray}
again to within $o(1)$. {As before in the proof of Theorem \ref{thm1}, this} 
implies that either $d_i \approx \E$ or 
that $\psi(\E, d_i)$ is maximized with respect to $d_i$. 

Unlike in the $k$-star case, it is not true that $\psi'(\E,\tilde \E)$
has a unique solution for each $\E$. However, it remains true that
$\psi(\E,\tilde \E)$ has a unique global maximizer (w.r.t. $\tilde
\E$) for all but finitely many values of $\E$. Since the equations
defining multiple maxima are analytic, they must be satisfied either
for all $\E$ or for only finitely many $\E$. But it is straightforward to 
check that there is only one maximizer when $\E$ is sufficiently small, since
then $h(\E)$ and $h'(\E)$ are dominated by the lowest order term in the 
polynomial. 

Thus, for all but finitely many values of $\E$, the values of $d_i$
must all either approximate $\E$ or the unique value of $\tilde \E$
that maximizes $\psi(\E, \tilde \E)$. This allows for a re-segregation
of the clusters into Type I (with $d_i$ close to $\tilde \E$) and Type
II (with $d_i$ close to $\E$) and yields a graphon that
is approximately bipodal.   Step 2 of the proof of Theorem
\ref{thm1}, proving that the optimizing graphon is exactly
bipodal with data of the desired form, then procedes exactly as before.

What remains is showing that the optimizing graphon is unique by linearizing
the exact optimality equations for bipodal graphons near $c=0$. 
These equations are:
\begin{eqnarray} 
  S_0'(p_{11}) & = &  \alpha + \beta h'(d_1), \cr
  S_0'(p_{12}) & = & \alpha + \beta(h'(d_1) + h'(d_2))/2, \cr 
  S_0'(p_{22}) & = & \alpha + \beta h'(d_2), \cr
  \frac{\partial S}{\partial c} & = & \alpha \frac{\partial \E}{\partial c} + \beta \frac{\partial \E}{\partial c}, \cr 
  \E & = & \E_0, \cr 
  \T & = & \T_0.
\end{eqnarray}

Using the second and third equations to eliminate $\alpha$ and $\beta$ gives:
\vskip1truein
\begin{eqnarray} \label{alphabeta-general}
\alpha & = & \frac{2 h'(d_2) S_0'(p_{12})-S_0'(p_{22}) (h'(d_2)+h'(d_1))}
{h'(d_2) - h'(d_1)}, \cr 
&& \cr 
\beta & = & \frac{2(S_0'(p_{22}) - S_0'(p_{12}))}{h'(d_2)-h'(d_1)}.
\end{eqnarray}
We also have $\alpha = S_0'(p_{22})-\beta h'(d_2)$ and 
$S_0'(p_{11}) = 2S_0'(p_{12})-S_0'(p_{22})$. Note that
\be \frac{\partial \alpha}{\partial p_{12}} = 
-\beta c h''(d_2)-h'(d_2) 
\frac{\partial \beta}{\partial p_{12}} \Rightarrow -h'(p_{22})  
\frac{\partial \beta}{\partial p_{12}}\ee
as $c \searrow 0$. 

We define $\vec f$ as before, with $f_3=\E$ and $f_4=\T$, and compute
\begin{eqnarray}
df_3 &=& (c^2, 2c(1-c), (1-c)^2, 2cp_{11}+2(1-2c)p_{12}-2(1-c)p_{22}) \cr 
& \Rightarrow&  (0,0,1,2(p_{12}-p_{22})) ,\cr 
df_4 & = & (c^2 h'(d_1), c(1-c)(h'(d_1)+h'(d_2)), (1-c)^2 h'(d_2), \cr 
&& \qquad \qquad h(d_1)-h(d_2) + ch'(d_1)(p_{11}-p_{12}) + h'(d_2)(p_{12}-p_{22})) \cr 
& \Rightarrow & (0,0, h'(p_{22}), h(p_{12})-h(p_{22})+h'(p_{22})(p_{12}-p_{22})).
\end{eqnarray}
The lower right block of $d\vec f$ then gives a contribution of 
$h(p_{12})-h(p_{22}) + h'(p_{22})(p_{12}-p_{22}) - 2h'(p_{22})(p_{12}-p_{22}) 
= h(p_{12})-h(p_{22}) - h'(p_{22})(p_{12}-p_{22})=D(p_{22},p_{12})$. 

As before, $\partial f_2/\partial p_{11} = 0$ when $c=0$, so 
$\det(d\vec f) = S_0''(p_{11})(h(p_{11})-h(p_{22}) - h'(p_{22})(p_{12}-p_{22}))
\partial f_2/\partial p_{11}.$ Now
\be
\frac{\partial f_2}{\partial p_{12}} = 
\frac{\partial^2 S}{\partial c \partial p_{12}} - \alpha   
\frac{\partial^2 \E}{\partial c \partial p_{12}} - \beta   
\frac{\partial^2 \T}{\partial c \partial p_{12}} 
- \frac{\partial \alpha}{\partial p_{12}} \frac{\partial \E}{\partial c}
- \frac{\partial \beta}{\partial p_{12}} \frac{\partial \T}{\partial c}.\ee
Since $\alpha$ and $\beta$ are independent of $c$, the first three terms
are 
\be
 \frac{\partial}{\partial c} \left (
\frac{\partial S}{\partial p_{12}} - \alpha   
\frac{\partial \E}{\partial p_{12}} - \beta   
\frac{\partial \T}{\partial p_{12}} \right ) 
= \frac{\partial}{\partial c} (0)=0,
\ee
by the second variational equation. This leaves 
\be
 \partial f_2/\partial p_{12} = (h'(p_{22}) (2p_{12}-2p_{22}) - (h(p_{12})-h(p_{22}) + h'(p_{22})(p_{12}-p_{22}))) \partial \beta/
\partial p_{12}.\ee
Combining with our earlier results, we have:
\be
 \det(d \vec f) = -S_0''(p_{11}) D(p_{22},p_{12})^2 
\frac{\partial \beta}{\partial p_{12}}.\ee    
The expression $D(p_{22},p_{12}) = h(p_{12})-h(p_{22}) - h'(p_{22})
(p_{12}-p_{22})$ has a double root at $p_{12}=p_{22}$ and is nonzero elsewhere,
thanks to the monotonicity of $h'$.

As a last step, we consider when $\partial \beta/\partial p_{12}$ can be 
zero. Since $\beta = N'/D'$, 
we are interested in when $(N'/D')'=0$. But that is equivalent to 
having $N''/D'' = N'/D'$. 
Since we already have $N/D=N'/D'$, this means that 
$\psi''=(N/D)''=0$. 
Since we are looking at the value of $\tilde \E$ that maximizes $\psi$, 
having $\psi'=\psi''=0$ would imply $\psi'''=0$ (or else $\tilde \E$ would
only be a point of inflection, and not a local maximum).
But if $(N/D)'=(N/D)''=(N/D)'''=0$, then $N/D=N'/D' = N''/D'' = N'''/D'''$. 
Note that $N''$, $N'''$, $D''$ and $D'''$ are functions of $\tilde e$ only, and
are rational functions:
\begin{eqnarray}
N'' & = & 2 S_0''(\tilde e) = \frac{-1}{\tilde e} - \frac{1}{1-\tilde e}, \cr 
N''' & = & 2 S_0'''(\tilde e) = \frac{1}{\tilde e^2} 
- \frac{1}{(1-\tilde e)^2}, \cr 
D'' & = & h''(\tilde e), \cr 
D''' & = & h'''(\tilde e).
\end{eqnarray}
Setting $D''N'''=D'''N''$ gives a polynomial equation for $\tilde \E$, 
which has only finitely many roots. Since the equation $\psi'=0$ is 
symmetric is $\E$ and $\tilde \E$, $\tilde \E$ determines
$\E$, so there are only finitely many values of $\E$ for which
$\partial \beta/\partial p_{12}$ is
zero. 

In summary, we exclude the finitely many values of $\E$ for which 
$\psi$ achieves its maximum more than once, and the finitely many values
of $\E$ for which $\partial \beta/\partial p_{12}=0$. For all other values of 
$\E$, the optimizing graphon is bipodal of the prescribed form and unique.  

\end{proof}

\section{Proof of Theorem \protect{\ref{Main-Thm-Complex}}}
\label{SEC:complex}

The proof has three steps.

\begin{itemize}[leftmargin=\dimexpr 26pt+6mm]
\item[Step 1.] Showing that, for fixed $\E$, $\Delta \T$ can be approximated 
by the change in a positive linear combination of $\T_k$'s, as studied in
the last section. 
\item[Step 2.] Defining a set $B_H \subset (0,1)$ of ``bad values'', determined
by analytic equations, such that for all $\E \not \in B_H$ and for $\T$ close
enough to $\E^\ell$, the optimizing graphon is unique and bipodal and of the
desired form. 
\item[Step 3.] Showing that $B_H$ is finite. 
\end{itemize}

\paragraph{Step 1.} 
This is a repetition of the proof of Lemma \ref{LEM:close}. In the
expansion of $\Delta \T$, we get a contribution $n_k \E^{\ell-k} \Delta \T_k$
from diagrams where all the edges associated with $\Delta g$ 
are connected to a vertex of degree $k$, where $n_k$ is the number
of vertices of $H$ of degree $k$. Summing over $k$, and bounding
the remaining terms by $O(\| \Delta g\|^3)$, as before, we have 
\be \Delta \T = \sum_k n_k \E^{\ell-k} \Delta \T_k + O(\Delta \T^{3/2}).\ee

\paragraph{Step 2.} 
For fixed $\E$, we consider a model whose density is $\sum_k n_k
\E^{\ell-k} \T_k$. As long as $\psi(\E,\tilde \E)$ for this model
achieves its maximum at a unique value of $\tilde \E$, and as long as
$\partial \beta/\partial p_{12} \ne 0$ when $p_{12}$ equals this value of $\tilde\E$, the proofs of Theorems
\ref{Main-Thm-Simple} and \ref{Thm-Linear-Combination} carry over
almost verbatim. 

That is, the model problem has a unique bipodal maximizer by
the reasoning of Theorem \ref{Thm-Linear-Combination}. The entropy
maximizer for the actual problem involving $H$ must approximate the
entropy maximizer for the model problem, and in particular must be
approximately bipodal, and so can be written as $g_b + \Delta g_f$,
where $\Delta g_f$ averages to zero on each quadrant. The same arguments
as in the proof of Theorem \ref{Main-Thm-Simple} show that $\Delta g_f$ 
is pointwise small. By a power series expansion, 
$({s(g_b + \Delta g_f)-s(g_b)})/({\T(g_b + \Delta g_f)-\T(g_b)})<\beta$,
so for small $c$ we can increase the entropy by setting $\Delta g_f$ to zero 
and varying the bipodal data to achieve the correct value of $\T$.

\paragraph{Step 3.} For any fixed $\E$, the model problem has only a
finite number of bad values of $\E$, but this is not enough to prove that
$B_H$ is finite.  Rather
\be B_H = \{ \E | \E \hbox{ is one of the bad points for the model with }
a_k = n_k \E^{\ell-k}
\},\ee
where a value of $\E$ is bad for a model 
if either $\psi$ has multiple maxima or if $\partial \beta/\partial p_{12}=0$. 
Since the bad points for any linear combination of $k$-stars depends
analytically on the coefficients of that linear combination, and since these
coefficients are powers of $\E$, 
the set $B_H$ is cut out by analytic equations in $\E$.

As such, $B_H$ is either the entire interval $(0,1)$, or a finite set,
or a countable set with limit points only at 0 and/or 1.  We will show
that neither $0$ nor $1$ is a limit point of $B_H$, implying that
$B_H$ is finite.

Let $k_{max}$ be the largest degree of any vertex in $H$, and
consider the model problem with $h(x) = \sum_{k=2}^{k_{max}} a_k x^k$,
where $a_k = n_k \E^{\ell - k}$.  We begin with some constraints on
the values of $\tilde \E$ for which $\psi'=0$.

\begin{lemma}\label{LEM-E-tilde}
  Suppose that $\psi'(\E,\tilde \E)=0$.  If $\tilde \E=\E$, or if
  $\partial \beta/\partial p_{12}=0$ when $p_{22}=\E$ and
  $p_{12}=\tilde \E$, then $({1}/{2}) \le \tilde \E \le
  ({k_{max}-1})/{k_{max}}$.
\end{lemma}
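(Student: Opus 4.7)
My plan is to handle the two alternatives of the hypothesis separately, both making essential use of the preliminary fact that the critical points of $\phi(x) := S_0''(x)/h''(x)$ all lie in $[\tfrac{1}{2},\tfrac{k_{max}-1}{k_{max}}]$. To establish this preliminary: write $\phi(x) = -1/[2G(x)]$ with $G(x) = x(1-x)h''(x)$, so critical points of $\phi$ coincide with critical points of $G$. From $G'(x) = (1-2x)h''(x) + x(1-x)h'''(x) = 0$, substituting the series for $h''$ and $h'''$, one rearranges to $x = \sum_{k=2}^{k_{max}} w_k (k-1)/k$ with $w_k \propto a_k k^2(k-1) x^{k-2} \ge 0$ summing to $1$; hence every critical point of $\phi$ is a convex combination of $(k-1)/k$ for $k \ge 2$, and lies in the claimed interval.

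For Case A ($\tilde\E = \E$), I read $\psi'(\E,\E)$ via the removable singularity of $\psi$ at the diagonal. Taylor-expanding $N$ and $D$ (each vanishing to order two at $\tilde\E = \E$) and applying the quotient rule gives
\[
\psi'(\E,\E) \;=\; \frac{N'''(\E,\E)\,D''(\E,\E) - N''(\E,\E)\,D'''(\E,\E)}{3\,D''(\E,\E)^2}.
\]
Since $N''(\E,\tilde\E) = 2S_0''(\tilde\E)$ and $D''(\E,\tilde\E) = h''(\tilde\E)$, this vanishes iff $S_0'''(\E)h''(\E) = S_0''(\E)h'''(\E)$, i.e., $\phi'(\E) = 0$. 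Thus $\tilde\E = \E$ is a critical point of $\phi$ and the preliminary fact applies.

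For Case B ($\tilde\E \ne \E$ with $\partial\beta/\partial p_{12} = 0$), combining with $\psi' = 0$ gives the triple equality $N/D = N'/D' = N''/D''$ at $(\E,\tilde\E)$, whose common value is $r := 2\phi(\tilde\E)$. Set $g(y) = 2S_0(y) - rh(y)$ and $L(y) = g(\E) + (y-\E)g'(\E)$; the three equations then say that $g-L$ has a double zero at $\E$ and a zero of order $\ge 3$ at $\tilde\E$. A direct calculation gives $(g-L)'''(\tilde\E) = 2h''(\tilde\E)\,\phi'(\tilde\E)$, so if the zero at $\tilde\E$ has order exceeding three then $\phi'(\tilde\E) = 0$ and we conclude as in Case A. Otherwise $\tilde\E$ is an exact triple zero of $g-L$; applying generalized Rolle twice shows that $\mu(y) := (g-L)''(y) = 2h''(y)\,[\phi(y) - \phi(\tilde\E)]$ has exactly three zeros in $[\E,\tilde\E]$ (a simple zero at $\tilde\E$ and two further interior simple zeros), so $\phi$ takes the value $\phi(\tilde\E)$ at three distinct points of $[\E,\tilde\E]$.

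The main obstacle is this last sub-case of Case B: upgrading the presence of two interior zeros of $\mu$ to $\tilde\E$ itself lying in the stated interval. I plan to argue by contradiction: assume $\tilde\E < \tfrac{1}{2}$. Strict monotonicity of $G$ on $(0,\tfrac{1}{2}]$ (a by-product of the preliminary fact, since $G'(x) > 0$ there) gives $\mu > 0$ on $(\tilde\E, \tfrac{1}{2}]$, so both interior zeros $y_1, y_2$ of $\mu$ lie in $(\tfrac{1}{2}, \E)$. The equations underlying Case B yield the two moment conditions $\int_{\tilde\E}^\E \mu\,dy = 0$ and $\int_{\tilde\E}^\E (y-\tilde\E)\,\mu\,dy = 0$; splitting these integrals at $y = \tfrac{1}{2}$ and exploiting the strict positivity of $\mu$ on the left piece together with the sign-alternation forced on the right piece, a weighted-moment comparison should produce an incompatibility. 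The case $\tilde\E > \tfrac{k_{max}-1}{k_{max}}$ is handled symmetrically using strict monotonicity of $G$ on $[\tfrac{k_{max}-1}{k_{max}}, 1)$.
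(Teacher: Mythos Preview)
Your preliminary computation and Case A are correct and match the paper exactly: the condition $N''D'''=N'''D''$ (equivalently $\phi'(\tilde\E)=0$) forces $\tilde\E\in[\tfrac12,\tfrac{k_{max}-1}{k_{max}}]$ via the convex-combination identity you derived.

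In Case B the paper takes a much shorter route. It uses---without restating it in the lemma---the contextual hypothesis that $\tilde\E$ is a \emph{local maximum} of $\psi(\E,\cdot)$ (this is how the lemma is applied in Step~3 of Theorem~\ref{Main-Thm-Complex}, and the same reasoning already appears in the proof of Theorem~\ref{Thm-Linear-Combination}). The assumptions $\psi'=0$ and $\partial\beta/\partial p_{12}=0$ give $N/D=N'/D'=N''/D''$, hence $\psi'=\psi''=0$; at a local maximum this forces $\psi'''=0$ as well (otherwise $\tilde\E$ would be an inflection point), whence $N''/D''=N'''/D'''$, i.e.\ $\phi'(\tilde\E)=0$. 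Thus Case B collapses immediately to your preliminary fact, with no further analysis.

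Your attempt to handle Case B without the maximum hypothesis builds a nice framework---the function $g-L$, its zero orders at $\E$ and $\tilde\E$, and the two moment identities $\int\mu=\int(y-\tilde\E)\mu=0$---but the final paragraph is not a proof. You say a ``weighted-moment comparison \emph{should} produce an incompatibility'' without carrying it out, and in fact the information assembled is not enough: a continuous $\mu$ with $\mu(\tilde\E)=0$, $\mu>0$ on $(\tilde\E,\tfrac12]$, sign pattern $+,-,+$ on $(\tfrac12,\E)$, can certainly satisfy both moment conditions (three signed masses against two linear constraints). Note also that Rolle gives \emph{at least} three zeros of $\mu$, not ``exactly three'' as you write. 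So as it stands there is a genuine gap: either invoke the maximum hypothesis as the paper does, reducing Case B to $\phi'(\tilde\E)=0$ in one line, or supply a substantively new argument exploiting the specific form $\mu=2h''(\phi-\phi(\tilde\E))$ rather than just its sign and moment data.
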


\begin{proof}[Proof of lemma]
  In both cases we are looking for solutions to $N'' D'''=N'''
  D''$. Since $N'' = 2 S_0''(\tilde \E)$, $N''' = 2 S_0'''(\tilde
  \E)$, $D'' = h''(\tilde \E)$ and $D'''=h'''(\E)$, this equation does
  not involve $\E$ (except insofar as the coefficients of $h$ depend
  on $\E$). We have
  \begin{eqnarray} \frac{2 S_0'''(\tilde \E)}{2S_0''(\tilde \E)} & =
    & \frac{h'''(\tilde \E)}{h''(\tilde \E)}, \cr 
    \frac{1}{1-\tilde \E}
    - \frac{1}{\tilde \E} & =& \frac{h'''(\tilde \E)}{h''(\tilde \E)},
    \cr 
    \frac{2\tilde \E-1}{1-\tilde \E} & =& \frac{\tilde \E
      h'''(\tilde \E)}{h''(\tilde \E)}, \cr 
      \frac{1}{1-\tilde \E} - 2 &
    = & \frac{\sum k(k-1)(k-2) a_k \tilde \E^{k-2}}{\sum k(k-1) a_k
      \tilde \E^{k-2}}.
\end{eqnarray}
The right hand side of the last line is a weighted average of $k-2$
with weights $k(k-1)a_k \tilde \E^{k-2}$, and so is at least zero and
at most $k_{max}-2$. Thus $(1-\tilde \E)^{-1}$ is between 2 and
$k_{max}$ and $\tilde \E$ is between $1/2$ and
$({k_{max}-1})/{k_{max}}$.
\end{proof}

\begin{lemma} If $\psi'(\E, \tilde \E)=0$, and if $\E$ is sufficiently
  close to 1, then $\tilde \E$ is uniquely defined and approaches 0 as
  $\E \to 1$. Likewise, if $\E$ is sufficiently close to 0, then
  $\tilde \E$ is uniquely defined and approaches 1 as $\E \to 0$.
\end{lemma}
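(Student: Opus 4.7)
The plan is to prove the $\E \to 1$ case; the $\E \to 0$ case is handled by a symmetric argument at the end. Write $\E = 1 - \delta$, so that $h(x) = \sum_k n_k \E^{\ell-k} x^k$ converges in $C^\infty$ on compact sets to the fixed polynomial $H(x) := \sum_k n_k x^k$ as $\delta \to 0^+$. I will split $\tilde\E \in [0, 1)$ into three regimes---a \emph{middle} regime $[\eta, 1-\eta']$, a \emph{near-$\E$} regime $\tilde\E = 1 - r\delta$ with $r$ bounded, and a \emph{near-$0$} regime $\tilde\E = \epsilon \to 0$---then rule out critical points in the first two and exhibit a unique critical point $\epsilon^* \to 0$ in the third.

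In the middle regime, the direct asymptotics $S_0'(\E) = \tfrac{1}{2}\log\delta + O(\delta)$ yield
\be
N(\E,\tilde\E) = (1-\tilde\E)\log\delta + 2 S_0(\tilde\E) + O(\delta\log\delta), \quad D(\E,\tilde\E) = B_0^H(\tilde\E) + O(\delta),
\ee
where $B_0^H(\tilde\E) := H(\tilde\E) - H(1) - (\tilde\E-1)H'(1) > 0$. Thus $\psi \sim c(\tilde\E)\log\delta$ with $c(\tilde\E) := (1-\tilde\E)/B_0^H(\tilde\E)$, and a short computation gives $c'(\tilde\E) = -G(\tilde\E)/B_0^H(\tilde\E)^2$ for $G(x) := H(x) + (1-x)H'(x) - H(1)$. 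Since $G'(x) = (1-x)H''(x) > 0$ on $(0,1)$ and $G(1) = 0$, we have $G < 0$ there, so $c' > 0$ and $\partial_{\tilde\E}\psi \sim c'(\tilde\E)\log\delta \to -\infty$, ruling out critical points in this range.

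In the near-$\E$ regime, Taylor expanding $N$ and $D$ at $(\tilde\E,\E) = (1,1)$ gives $N \sim \delta[2(r-1) - 2r\log r]$ and $D \sim \delta^2 (r-1)^2 H''(1)/2$, so $\psi \sim 2\phi(r)/(\delta H''(1))$ with $\phi(r) = (-r\log r + r-1)/(r-1)^2$. Writing $\phi'(r) = g(r)/(r-1)^3$ for $g(r) = (r+1)\log r - 2(r-1)$, the identities $g(1)=g'(1)=g''(1)=0$ and $g'''(r) = (2-r)/r^3$ with $g'''(1) = 1 > 0$ force $g$ to share the sign of $(r-1)^3$ on $(0,\infty)$. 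Hence $\phi' > 0$ and $\psi$ is strictly monotone in $\tilde\E$ throughout this regime. Combined with the middle regime, every critical point must lie in $(0, \eta)$ for all $\delta$ sufficiently small, so $\tilde\E^* \to 0$.

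For the near-$0$ regime, expanding $N'(\E,\epsilon) = -\log(\epsilon\delta) + O(1)$ and $D'(\E,\epsilon) = -H'(1) + o(1)$ and combining with the first-regime expressions for $N, D$, the equation $\psi'(\E,\epsilon) = 0$ reduces at leading order to
\be
(H'(1) - H(1))\log\epsilon = H(1)\log\delta,
\ee
whose unique positive solution is $\epsilon^*(\delta) = \delta^{H(1)/(H'(1)-H(1))}(1 + o(1))$; since $H'(1) - H(1) = \sum_k (k-1) n_k > 0$, the exponent is positive and $\epsilon^* \to 0$. Uniqueness and analyticity of the critical point near $\epsilon^*$ follow from the implicit function theorem applied to the analytic equation $\psi' = 0$, with the non-degeneracy $\psi''(\E,\epsilon^*) \neq 0$ coming from the same leading balance. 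The main obstacle is controlling error estimates uniformly across the overlaps between the three regimes, so that no spurious critical points slip in between the Taylor expansion at $(1,1)$ and the middle-regime asymptotics. The $\E \to 0$ case is handled by a mirror argument: since $h(x)/\E^{\ell-k_{\max}} \to n_{k_{\max}} x^{k_{\max}}$ and critical points of $\psi$ are invariant under positive rescaling of $D$, a parallel three-regime decomposition (with the roles of $0$ and $1$ swapped) identifies a unique critical point $\tilde\E^* \to 1$, consistent with the involution property of $\zeta_{k_{\max}}$ in Theorem \ref{thm-zeta}.
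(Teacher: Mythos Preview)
Your three-regime decomposition is a reasonable strategy, different from the paper's direct manipulation of the equation $DN'-ND'=0$, but as written it has two real gaps.

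The first is the one you flag yourself: for fixed $\eta'$ and a fixed bound $R$ on $r$, the interval $\tilde\E\in(1-\eta',\,1-R\delta)$ is uncovered and does not shrink as $\delta\to 0$, so a spurious critical point could in principle live there. Closing this requires a moving cutoff (say $\eta'=|\log\delta|^{-1/4}$) together with uniform error control in both expansions across that overlap; this is doable but is genuine work, and you stop short of doing it. A related issue: your implicit-function argument near $\epsilon^*$ gives only \emph{local} uniqueness; you still need a monotonicity statement for $N'D-ND'$ on all of $(0,\eta)$ to exclude other zeros. (Minor point in the same spirit: $g'''(1)>0$ only gives the sign of $g$ locally. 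Globally you want $g''(r)=(r-1)/r^2$, whence $g'\ge 0$ with equality only at $r=1$, so $g$ is monotone with $g(1)=0$.)

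The second gap is the $\E\to 0$ case. This is not a ``symmetric argument'': $N$ enjoys the symmetry $(\E,\tilde\E)\mapsto(1-\E,1-\tilde\E)$ but $D$ does not, and as $\E\to 0$ the coefficients $n_k\E^{\ell-k}$ diverge rather than converge. Your rescaling to $n_{k_{\max}}x^{k_{\max}}$ is the right idea for $\tilde\E$ bounded away from $0$, but the regime where both $\E$ and $\tilde\E$ are small needs its own analysis; the paper handles it with the substitution $r=\tilde\E/\E$ and the expansion $N\approx-\E[r\log r-r+1]$, $D=\E^\ell[\bar h(r)-\bar h(1)-\bar h'(1)(r-1)]$, which is genuinely different from your near-$\E$ expansion at $\E\to 1$ because $\bar h$ has degree $\ge 3$ there. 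The paper sidesteps your overlap problem altogether by writing $DN'-ND'=0$ explicitly as
\[
S_0'(\E)\,D(\E,\tilde\E)\;-\;S_0'(\tilde\E)\,D(\tilde\E,\E)\;+\;\bigl(S_0(\tilde\E)-S_0(\E)\bigr)\bigl(h'(\tilde\E)-h'(\E)\bigr)=0,
\]
and observing that the first term diverges as $\E\to\{0,1\}$ while the others stay bounded for $\tilde\E$ in compact subsets of $(0,1)$; this forces $\tilde\E\to\{0,1\}$ in one stroke, after which two short endpoint analyses finish the job.
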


\begin{proof} When $\E < 1/2$, or when $\E >
  ({k_{max}-1})/{k_{max}}$, we cannot have $\tilde \E = \E$, so the
  equation $\psi'=0$ is equivalent to $ND'=DN'$ and $\tilde \E \ne
  \E$.  Writing $DN'-ND'=0$ explicitly, and doing some simple algebra,
  yields the equation
\be
S_0'(\E)[h (\tilde \E) - h(\E) - (\tilde \E-\E) h'(\E)] - S'(\tilde \E)[[h (\tilde \E) - h(\E) - (\tilde \E-\E) h'(\tilde \E)] 
+ (S_0(\tilde \E)-S_0(\E))(h'(\tilde \E)-h'(\E)) = 0.\ee 
If $\E$
approaches 0 or 1 and $\tilde \E$ does not, then the first term
diverges, while the other terms do not, insofar as $S_0'$ has
singularities at 0 and 1 but $S_0$, $h$ and $h'$ do not.  Thus $\tilde
\E$ must go to 0 or 1 as $\E$ goes to 0 or 1.

We next rule out the possibility that both $\E$ and $\tilde \E$ approach 1. 
Suppose that $\E$ is close to 1. We expand both $N$ and $D$ in powers of 
$(\tilde \E - \E)$:
\begin{eqnarray}
N & = & \sum_{m=2}^\infty \frac{2 S_0^{(m)}(\E)}{m!} (\tilde \E - \E)^m \cr 
& = & - \sum_{m=2}^\infty \left ( \frac{1}{(1-\E)^{m-1}} + \frac{(-1)^{m}}
{\E^{m-1}} \right ) \frac{(\tilde \E-\E)}{m(m-1)}, \cr 
D & = & \sum_{m=2}^{k_{max}} \frac{h^{(m)}(\E)}{m!} (\tilde \E - \E)^m,
\end{eqnarray}
where $S_0^{(m)}$ and $h^{(m)}$ denote $m$th derivatives. 
Note that the coefficients of the numerator grow rapidly with $m$, while 
the growth of the coefficients of the denominator depend only on the degree
of $h$. For $\tilde \E > \E > (k_{max} - 1)/{k_{max}}$, $\psi = N/D$ is a 
decreasing function of $\tilde \E$ (that is, negative and 
increasing in magnitude), so we cannot have $\psi'=0$.  Since the equation
$\psi'=0$ is symmetric in $\E$ and $\tilde \E$ (apart from the dependence of 
the coefficients of $h$ on $\E$), we also cannot have 
$\E > \tilde \E > (k_{max}-1)/{k_{max}}$. 

When $\E$ is close to 1, we must thus have $\tilde \E$ close to 0. But then
$N \approx 2 S_0'(\E)$, $D \approx h'(\E)-h(\E)$, $D' \approx - h'(E)$, and 
the equation
\be
2 S_0'(\tilde \E) = N' + 2S_0'(\E) = 2 S_0'(\E) + ND'/D\ee 
determines $S_0'(\tilde \E)$, and therefore $\tilde \E$, uniquely as a function
of $\E$. 

Next we consider $\E \to 0$. If $H$ is 2-{starlike}, then $\psi$ is a multiple
of $\psi_2$, and the result is already known. Otherwise,
it is convenient to define a new polynomial
$\bar h(z) = \sum n_k z^k$, so that $h(x) = \E^\ell \bar h(x/\E)$. 
Then
\begin{eqnarray}
D & = & h(\tilde \E) - h(\E) - h'(\E)(\tilde \E-\E) \cr 
& = & \E^\ell [\bar h(r) - \bar h(1) - \bar h'(1)(r-1)]
\end{eqnarray}
where $r := \tilde \E/\E$. Likewise, 
\be
N  =  - \left [\tilde \E \ln(\tilde \E) - \E \ln(\E) + (1-\tilde \E)
\ln(1-\tilde \E) - (1-\E)(1-\tilde \E)-(\tilde \E-\E)(\ln(\E)-\ln(1-\E))
\right ] 
\ee
Since $\E$ and $\tilde \E$ are small, we can approximate 
$\ln(1-\E)$ and $\ln(1-\tilde \E)$ as $-\E$ and $-\tilde \E$,
respectively, giving
\be N \approx - \E[r \ln r - r + 1] + \E^2(r-r^2) \ee
Note that the ratio $\psi = N/D$ is negative. Since $\bar h$ is a polynomial
of degree at least 3, $D$ grows faster than $N$ as $r \to \infty$, so we can
always increase $\psi$ by taking larger and larger values of 
$r = \tilde \E/\E$. This argument only breaks down when the approximation
$\ln(1-\tilde E) \approx -\tilde \E$ breaks down, i.e. at values of 
$\tilde \E$ that are no longer close to 0. Thus we cannot have $\tilde \E$ and 
$\E$ both close to zero. 

Finally, if $\E$ is close to 0 and $\tilde \E$ is close to 1, then 
$h(\E)$ and $h'(\E)$ are close to zero, while 
$h(\tilde \E)$ is close to a multiple of $x^{k_{max}}$, since the coefficient of 
$x^{k_{max}}$ is $O(1/\E)$ larger than any other coefficient. Thus 
$\psi$ behaves like $\psi_{k_{max}}$, and has a unique maximizer. 
\end{proof}

We have shown that when $\E$ is close to 0 or 1, 
$\psi$ has a unique maximizer.  Furthermore,
$\tilde \E$ is not between $1/2$ and $({k_{max}-1})/{k_{max}}$,
so $\partial \beta/\partial p_{12} \ne 0$.  So $\E \not \in B_H$,
completing Step 3 and the proof of Theorem \ref{Main-Thm-Complex}.

\section{Conclusions}

We have shown that just above the ER curve, entropy maximizing
graphons, constrained by the densities of edges and any one other
subgraph $H$, exhibit the same qualitative behavior for all $H$ and
for (almost) all values of $\E$.  The optimizing graphon is unique and
bipodal.

These results were proven by perturbation theory, using the fact that
the optimizing graphon has to be $L^2$-close to a constant (Erd\H
os-R\'enyi) graphon. Surprisingly, the optimizing graphon is {\em not}
pointwise close to constant. Rather, it is bipodal, with a small
cluster of size $O(\Delta \T)$. As $\Delta \T$ approaches 0, the size
of the small cluster shrinks, but the values of the graphon on each
quadrant do not approach one another. Rather, $p_{22}$ approaches
$\E$, $p_{12}$ approaches the value of $\tilde \E$ that maximizes a
specific function $\psi(\E, \tilde \E)$, and $p_{11}$ satisfies
$S_0'(p_{11}) - 2 S_0'(p_{12}) + S_0'(p_{22})=0$.

Finally, the asymptotic behavior of these graphons as $\T \to \E^\ell$
depends only on the degree sequence of $H$. In particular, the cases
where $H$ is a triangle and when $H$ is a 2-star are asymptotically
the same. This is illustrated in Figure \ref{FIG:Bipodal 2Star Triag}.
Since $\Delta \T_{\text{triangle}} \approx 3 \E \Delta \T_2$, the
optimizing graphon for the 2-star model with $\E = 0.4$ and $\Delta
\T_2=0.002$ should resemble the optimizing graphon for the triangle
model with $\E=0.4$ and $\Delta \T_{\text{triangle}}=0.0024$. These
optimizing graphons are obtained using the algorithms we developed
in~\cite{RRS} \emph{without assuming bipodality}. Numerical estimates
indicate that the optimizing graphons are not exactly the same, thanks
to $O(\Delta \T_2^{3/2})$ corrections to $\Delta
\T_{\text{triangle}}$, but are still qualitatively similar.
 
\begin{figure}[ht]
\centering
\includegraphics[angle=0,width=0.4\textwidth]{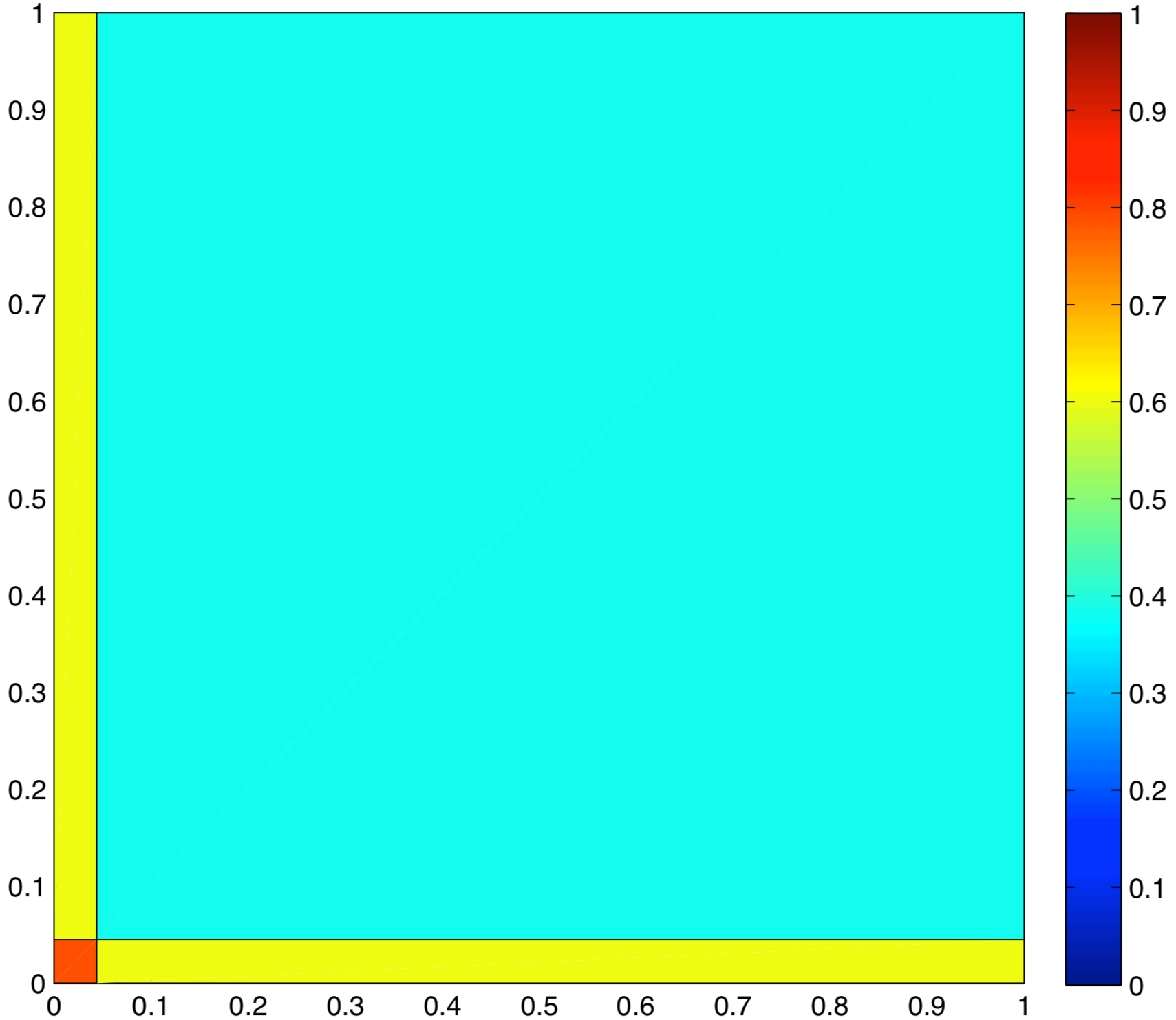}\hskip 1cm 
\includegraphics[angle=0,width=0.4\textwidth, height=0.345\textwidth]{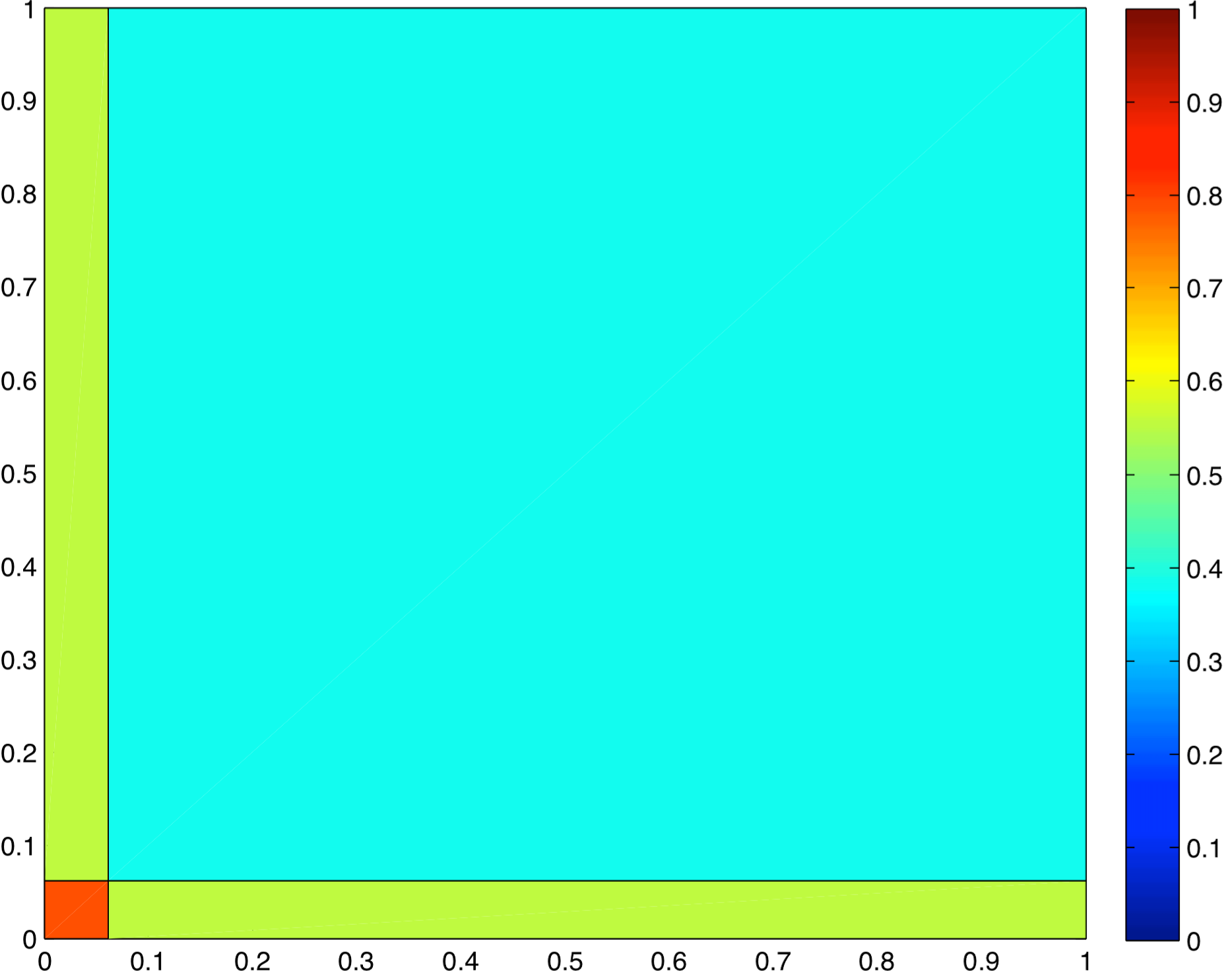} 
\caption{Numerical estimates of the optimizing graphon for the 2-star
  model with $\E=0.4$ and $\T_2=0.1620$ (left) and the optimizing
  graphon for the triangle model with $\E=0.4$ and
  $\T_{\text{triangle}}=0.0664$ (right).  (Although theoretically we
  have not tried to prove that $\Delta\tau_2 = 0.002$ is small enough
  to fit into the interval provided by
  Theorem~\protect{\ref{Main-Thm-Simple}}, numerically it appears to
  be the case.)}
\label{FIG:Bipodal 2Star Triag}
\end{figure}

\section*{Acknowledgments}

The computational results shown in this work were obtained on the computational facilities in the Texas Super Computing Center
(TACC). We gratefully acknowledge this computational support. R. Kenyon was partially supported by the Simons Foundation grant 327929.
This work was also partially supported by NSF grants DMS-1208191, DMS-1509088, DMS-1321018 and DMS-1101326.



\end{document}